\documentclass[12 pt]{amsart}

\usepackage{amssymb,amsmath,amsthm}

\usepackage[margin=2.5cm,footskip=1.5cm,headsep=1.5cm]{geometry}
\usepackage{tikz,pgfplots}
\usepackage{algorithm}
\usepackage{algorithmic}
\usepackage{subcaption}
\usepackage{cite}
\usepackage{listings}
\usepackage{multirow}
\usepackage{color}
\usepackage[centertags]{amsmath}
\usepackage{mathabx}
\usepackage{enumitem}
\setlist[itemize]{noitemsep, topsep=0pt, leftmargin=0cm}
\usepackage{amscd}

\usepackage{color}

\usepackage{epstopdf}
\usepackage{alltt}
\usepackage{float}
\usepackage{url}

\usepackage{framed}
\hyphenation{English}

\newtheorem{theorem}{Theorem}[section]
\newtheorem{lemma}[theorem]{Lemma}

\theoremstyle{remark}

\newtheorem{assumption}[]{Assumption}
\newtheorem{remark}[theorem]{Remark}

 \DeclareMathOperator{\grad}{grad}

\DeclareMathOperator{\gap}{gap}

\newcommand{\Creg}{C_{\mathrm{reg}}}

\newcommand{\om}{\varOmega}
\newcommand{\oh}{\varOmega_h}

\newcommand{\Poincare}{Poincar{\'{e}}}     
\newcommand{\ip}[1]{\langle {#1} \rangle}
\newcommand{\veps}{\varepsilon}
\newcommand{\dist}[1]{\mathop{\textstyle{\mathrm{dist}_{{#1}}}}}
\newcommand{\dom}{\mathop{\mathrm{dom}}}
\newcommand{\diam}{\mathop{\mathrm{diam}}}

\newcommand{\CCC}{\mathbb{C}}
\renewcommand{\div}{\mathrm{div}}

\newcommand{\RT}{RT}



%
%

%

%

%

\newcommand{\R}{\mathbb{R}}
\newcommand{\C}{\mathbb{C}}

\newcommand{\cA}{A}

\newcommand{\cH}{{\mathcal H}}

\newcommand{\cV}{{\mathcal V}}

\newcommand{\RR}{\mathbb{R}}

\newcommand{\ii}{\mathrm{i}}

\newcommand{\logLogSlopeTriangle}[5]
{

    \pgfplotsextra
    {
        \pgfkeysgetvalue{/pgfplots/xmin}{\xmin}
        \pgfkeysgetvalue{/pgfplots/xmax}{\xmax}
        \pgfkeysgetvalue{/pgfplots/ymin}{\ymin}
        \pgfkeysgetvalue{/pgfplots/ymax}{\ymax}

        \pgfmathsetmacro{\xArel}{#1}
        \pgfmathsetmacro{\yArel}{#3}
        \pgfmathsetmacro{\xBrel}{#1-#2}
        \pgfmathsetmacro{\yBrel}{\yArel}
        \pgfmathsetmacro{\xCrel}{\xArel}

        \pgfmathsetmacro{\lnxB}{\xmin*(1-(#1-#2))+\xmax*(#1-#2)} 
        \pgfmathsetmacro{\lnxA}{\xmin*(1-#1)+\xmax*#1} 
        \pgfmathsetmacro{\lnyA}{\ymin*(1-#3)+\ymax*#3} 
        \pgfmathsetmacro{\lnyC}{\lnyA+#4*(\lnxA-\lnxB)}
        \pgfmathsetmacro{\yCrel}{\lnyC-\ymin)/(\ymax-\ymin)} 

        \coordinate (A) at (rel axis cs:\xArel,\yArel);
        \coordinate (B) at (rel axis cs:\xBrel,\yBrel);
        \coordinate (C) at (rel axis cs:\xCrel,\yCrel);

        \draw[#5]   (A)-- node[pos=0.5,anchor=north] {{\tiny{$1$}}}
                    (B)-- 
                    (C)-- node[pos=0.5,anchor=west] {{\tiny{#4}}}
                    cycle;
    }
}

\newcommand{\RRR}{\mathbb{R}}

\lstset{
    language=Python,
    showstringspaces=false,
    basicstyle=\sffamily\tiny, 
    commentstyle=\itshape\tiny,
    morecomment=[s]{"""}{"""},
}
\newcommand{\Eh}[1]{{E_h^{({#1})}}}

\newcommand{\hdist}{\mathop{\mathrm{dist}}}

\newcommand{\lmax}{\Lambda_h^{\mathrm{max}}}

\renewcommand{\Omega}{\om}

\newcommand{\rcore}{r_{\text{core}}}
\newcommand{\rclad}{r_{\text{clad}}}
\newcommand{\nclad}{n_{\text{clad}}}
\newcommand{\ncore}{n_{\text{core}}}

 \newcommand{\rev}[1]{{#1}}
 \newcommand{\revv}[1]{{#1}}


\graphicspath{{figs/}}

\begin{document}

\title{Analysis of FEAST spectral approximations 
  using the DPG discretization}

\author[J. Gopalakrishnan]{Jay Gopalakrishnan}
\address{Portland State University, PO Box 751, Portland, OR 97207-0751, USA}
\email{gjay@pdx.edu}

\author[L.~Grubi\v{s}i\'{c}]{Luka Grubi\v{s}i\'{c}}
\address{University of Zagreb, Bijeni\v{c}ka 30, 10000 Zagreb, Croatia}
\email{luka.grubisic@math.hr}

\author[J. Ovall]{Jeffrey Ovall}
\address{Portland State University, PO Box 751, Portland, OR 97207-0751, USA}
\email{jovall@pdx.edu}

\author[B. Q. Parker]{Benjamin Q.~Parker}
\address{Portland State University, PO Box 751, Portland, OR 97207-0751, USA}
\email{bqp2@pdx.edu}

\begin{abstract}
  A filtered subspace iteration for computing a cluster of eigenvalues
  and its accompanying eigenspace, \revv{known as ``FEAST''}, has
  gained considerable attention in recent years. This work studies
  issues that arise when FEAST is applied to compute part of the
  spectrum of an unbounded partial differential operator.
  Specifically, \rev{when the resolvent
  of the partial differential operator is approximated by the
  discontinuous Petrov Galerkin (DPG) method,
  it is shown that there is no spectral pollution. The theory 
  also provides
  bounds on the discretization errors in the spectral
  approximations.} Numerical experiments
  for simple operators illustrate the theory and also indicate the value
  of the algorithm beyond the confines of the theoretical
  assumptions. The utility of the algorithm is illustrated by applying
  it to compute guided transverse core modes of a \revv{realistic
  optical} fiber.
\end{abstract}

\thanks{This work was partially supported by the AFOSR
  (through AFRL Cooperative Agreement \#18RDCOR018, under grant
  FA9451-18-2-0031),
  the Croatian Science Foundation grant HRZZ-9345, 
  bilateral Croatian-USA grant
  (administered jointly by Croatian-MZO and NSF) and 
  NSF grant DMS-1522471.
  The numerical studies
  were facilitated by the equipment acquired using NSF's Major
  Research Instrumentation grant DMS-1624776.}

\maketitle

\section{Introduction} \label{sec:introduction}

We study certain numerical approximations of the eigenspace associated
to a cluster of eigenvalues of a reaction-diffusion operator, namely
the unbounded operator $ \cA=-\Delta -\nu$ in $L^2(\om),$ whose domain
is $H^1_0(\Omega)$. Here $ \nu\in L^\infty(\Omega)$ and
$\Omega\subset\R^n$ is an open bounded set with Lipschitz boundary.
The eigenvalue cluster of interest is assumed to be contained inside a
finite contour $\Gamma$ in the complex plane $\C$.  The
\revv{computational}
technique is the FEAST algorithm~\cite{Poliz09}, which is now well
known as a subspace iteration, applied to an approximation of an
operator-valued contour integral over $\Gamma$. This technique
requires one to approximate the resolvent function
$z\mapsto R(z)=(z-\cA)^{-1}$ at a few points along the contour. The
specific focus of this paper is the discretization error in the final
spectral approximations when the discontinuous Petrov Galerkin (DPG)
method~\cite{DemkoGopal11} is used to approximate the resolvent.

Contour integral methods \cite{Beyn2012, Poliz09,GuttePolizTang15,
  SakurSugiu03}, such as FEAST, have been gaining popularity in
numerical linear algebra.  When used as an algorithm for matrix
eigenvalues, discretization errors are irrelevant, which explains the
dearth of studies on discretization errors within such algorithms.
However, in this paper, like in~\cite{GopalGrubiOvall18,
  HorniTowns19}, we are interested in the eigenvalues of a partial
differential operator on an infinite-dimensional space. In these
cases, practical computations can proceed only after discretizing the
resolvent of the partial differential operator by some numerical
strategy, such as the finite element method. We specifically focus on
the DPG method, a least-squares type of finite element method.

One of our motivations for considering the DPG discretization is that
it allows us to approximate $R(z)$ by solving a sparse Hermitian
positive definite system (even when $z - A$ is indefinite) using
efficient iterative solvers. Another practical reason is that it
offers a built-in (a posteriori) error estimator in the resolvent
approximation \revv{(see~\cite{CarstDemkoGopal14})},
thus immediately suggesting a straightforward
algorithmic avenue for eigenspace error control. The exploitation of
these advantages, including the design of preconditioners and adaptive
algorithms, are postponed to future work. The focus of this paper is
limited to obtaining {\it a priori} error bounds and convergence rates
for the computed eigenspace and accompanying Ritz values.

According to~\cite{GopalGrubiOvall18}, bounds on spectral errors can
be obtained from bounds on the approximation of the resolvent
$z \mapsto R(z)$. This function maps complex numbers to
bounded operators.  In~\cite{GopalGrubiOvall18}, certain finite-rank
computable approximations to $R(z)$, denoted by  $R_h(z)$, were considered
and certain abstract sufficient conditions were laid out for bounding
the resulting spectral errors. (Here $h$ represents some
discretization parameter like the mesh size.) This framework is
summarized in Section~\ref{AF}. Our approach to the analysis in this
paper is to verify the conditions of this abstract framework when
$R_h(z)$ is obtained using the DPG discretization.

One of our applications of interest is the fast and accurate
computation of the guided modes of optical fibers. In \revv{the} design and
optimization of new optical fibers, such as the emerging
microstructured fibers, one often needs to compute such modes many
hundreds of times for varying parameters. FEAST appears to offer a
well-suited method for this purpose. The Helmholtz operator arising
from the fiber eigenproblem is of the above-mentioned type (wherein
$\nu$ is related to the fiber's refractive index). In Section~\ref{fib}, we
will show the efficacy of the FEAST algorithm, combined with the DPG
resolvent discretization, by computing the modes of a commercially
marketed step-index fiber.

The outline of the paper is as follows.  In Section~\ref{AF} we
present the abstract theory from~\cite{GopalGrubiOvall18} pertaining
to FEAST iterations using discretized resolvents of unbounded
operators. In Section~\ref{DPG} we derive new estimates for
discretizations of a resolvent by the DPG method.
In Section~\ref{NS} we present benchmark results on problems with
well-known solutions which serve as a validation of the
method. Finally, in Section~\ref{fib} we apply the method to compute
the modes of a ytterbium-doped optical fiber.

\section{The abstract framework}\label{AF}

In this section, we summarize the abstract framework of
\cite{GopalGrubiOvall18} for analyzing spectral discretization errors
of the FEAST algorithm when applied to general selfadjoint
operators. Accordingly, in this section, $A$ is not restricted to the
reaction-diffusion operator mentioned in Section~\ref{sec:introduction}.
Here we let $A$ be a linear, closed, selfadjoint (possibly unbounded) operator
$\cA: \dom(\cA) \subseteq \cH \to\cH$ in a complex Hilbert space
$\cH$, whose real spectrum is denoted by $\Sigma(\cA)$. We are
interested in approximating a subset $\Lambda\subset \Sigma(\cA)$ that
consists of a finite collection of eigenvalues of finite multiplicity,
as well as its associated eigenspace $E$ (the span of all the
eigenvectors associated with elements of $\Lambda$).

The FEAST iteration uses a rational function
\begin{align}\label{ContourQuadrature}
r_N(\xi)=w_N + \sum_{k=0}^{N-1} w_k(z_k-\xi)^{-1}~.
\end{align}
Here the choices of  $w_k, z_k \in \CCC$ are typically motivated by quadrature approximations of the Dunford-Taylor integral
\begin{equation}
  \label{eq:DunfodTaylor}
  \revv{S = \frac{1}{2 \pi \ii}\oint_\Gamma R(z)\,dz,}
\end{equation}
where $R(z) = (z - A)^{-1}$ denotes the resolvent of $A$ at $z$.
Above, $\Gamma$ is a positively oriented, simple, closed contour
$\Gamma$ that encloses $\Lambda$ and excludes
$\Sigma(A)\setminus\Lambda$, so that $S$ is the
exact spectral projector onto $E$.
Define
\[
  S_N = r_N(\cA) = w_N + \sum_{k=0}^{N-1}w_k R(z_k).
\]
More details on examples of $r_N$ and their
properties can be found in~\cite{GuttePolizTang15, Poliz09}.

We are particularly interested in a further approximation of $S_N$
given by
\begin{align}\label{eq:SNh}
S^h_N&=w_N + \sum_{k=0}^{N-1}w_k R_h(z_k).
\end{align}
Here $R_h(z):\cH \to\cV_h$ is a finite-rank \revv{approximation of} the
resolvent $R(z)$, $\cV_h$ is a finite-dimensional subspace of a
\revv{complex Hilbert} space~$\cV$ embedded in $\cH$, and $h$ is a parameter
inversely related to $\dim(\cV_h)$ such as a mesh size parameter. Note
that there is no requirement that these resolvent approximations are
such that $S_N^h$ is selfadjoint. In fact, as we shall see later (see
Remark~\ref{rem:nonselfadj}), the $S_N^h$ generated by the DPG
approximation of the resolvent is not generally selfadjoint.

We consider a version of the FEAST iterations that use the above
approximations. Namely, starting with a subspace
$\Eh 0 \subseteq \cV_h$, compute
\begin{equation}
  \label{eq:1}
  \Eh\ell = S^h_N \Eh {\ell -1}, 
  \qquad 
  \text{ for } 
  \ell=1,2,\ldots. 
\end{equation}
If $A$ is a selfadjoint operator on a finite-dimensional space $\cH$
(such as the one given by a Hermitian matrix), then one may directly
use $S_N$ instead of $S_N^h$ in~\eqref{eq:1}.  This case is the well-studied FEAST iteration for Hermitian matrices, which  can approximate spectral clusters of $A$ that are
strictly separated from the remainder of the spectrum. In our abstract
framework for discretization error analysis, we place a similar 
separation assumption on the exact undiscretized spectral parts
$\Lambda $ and $\Sigma(A) \setminus \Lambda$.  Consider the following
strictly separated sets
$I_\gamma^y = \{ x \in \R: | x- y| \le \gamma\},$ and
$O_{\delta,\gamma}^y = \{ x \in \R: |x - y| \ge (1+\delta)\gamma\}$,
for some $y\in\RR$, $\delta>0$ and $\gamma>0$.  Using these sets and
the quantities
\begin{align}\label{ContractionFactor2}
W=\sum_{k=0}^{N}|w_k|,\quad\quad
\hat\kappa=
\frac{  \displaystyle\sup_{x \in O_{\delta,\gamma}^y}  |r_N(x)|}
{ \displaystyle \inf_{x \in I_\gamma^y}|r_N(x)|}.
\end{align}
we formulate \revv{a spectral separation assumption below.}

\begin{assumption}
  \label{asm:rN}
  There are $y\in\RR$,
  $\delta>0$ and $\gamma>0$ such that
  \begin{align}\label{SpectralGap}
    \Lambda\subset I_\gamma^y,
    \qquad 
    \Sigma(\cA)\setminus\Lambda \subset
    O_{\delta,\gamma}^y,
  \end{align}
  and that $r_N$ is a rational function of the
  form~\eqref{ContourQuadrature} with the following properties:
  \[
    z_k \notin\overline{\Sigma(\cA)},\quad
    W<\infty, \quad \revv{\text{and}} \quad
    \hat \kappa <1.
  \]
\end{assumption}

\begin{assumption}
  \label{asm:Vshort}
  The Hilbert space $\cV$ is such that $E \subseteq
  \cV \subseteq \cH$,  there is a $C_\cV>0 $ such that for all
  $u \in \cV$, $ \| u \|_\cH \le C_\cV \| u \|_\cV$, and $\cV$ is an
  invariant subspace of $R(z)$ \revv{for all $z$ in the resolvent set
    of $A$, i.e., $R(z) \cV \subseteq \cV$.}
  (We allow $\cV = \cH$, \revv{and further
  examples where $\cV \ne \cH$ can be found in \cite[\S2]{GopalGrubiOvall18}.)}
\end{assumption}
\begin{assumption}
  \label{asm:Rlim}
  The operators $R_h(z_k)$ and $R(z_k)$ are bounded in
  $\cV$ and satisfy
  \begin{equation}
    \label{eq:Rh-R}
    \lim_{h \to 0}\; \revv{\max_{k=0,\ldots, {N-1}}} \| R_h(z_k) - R(z_k) \|_{\cV} = 0.
  \end{equation}  
\end{assumption} 

\begin{assumption}
  \label{asm:Vh_in_dom(a)}
  Assume that $\cV_h$ is contained in $\dom(a)$, where
  $a(\cdot, \cdot)$ denotes the symmetric (possibly unbounded)
  sesquilinear form associated to the operator $A$ (as described in,
  say, \cite[\S10.2]{Schmu12} or \cite[\S5]{GopalGrubiOvall18}).
\end{assumption}

Various examples of situations where
one or more of these assumptions hold can be found in
\cite{GopalGrubiOvall18}.
Next, we proceed to describe the main consequences of these
assumptions of interest here.  Let
$\Lambda = \{ \lambda_1, \ldots, \lambda_m\},$ counting multiplicities,
so that $m = \dim(E)$.  By the strict separation of
Assumption~\ref{asm:rN}, we can find a curve $\Theta$ that encloses
$\mu_i = r_N(\lambda_i)$ and no other eigenvalues of $S_N$. By
Assumption~\ref{asm:Rlim}, $S_N^h$ converges to $S_N$ in norm, so for
sufficiently small $h$, the integral
\[
  P_h = \frac{1}{2 \pi \ii} \oint_\Theta (z - S_N^h)^{-1} \;dz
\]
is well defined and equals the spectral projector of $S_N^h$
associated with the contour $\Theta$.  Let $E_h$
denote the range of $P_h$. Now, let us turn to the
\revv{iteration~\eqref{eq:1}. We shall} tacitly assume
throughout this paper that $\Eh 0 \subseteq \cV_h$ is chosen so that
$\dim \Eh 0 = \dim (P_h \Eh 0) = m$. In practice, this is not
restrictive: we usually start with a larger than necessary $\Eh 0$ and
truncate it to dimension $m$ as the iteration progresses. 

In order to describe convergence of spaces, we need to measure the
distance between two linear subspaces $M$ and $L$ of $\cV$.  For this,
we use the standard notion of gap \cite{Kato95} defined by
\begin{equation}
  \label{eq:gapV}
\gap_\cV( M, L ) = \max \left[ \sup_{m \in U_M^\cV} \dist{\cV} ( m, L), \;
  \sup_{l \in U_L^\cV} \dist{\cV} ( l, M) \right],
\end{equation}
where $\dist{\cV}(x, S) = \inf_{s \in S} \| x - s\|_\cV$
and
$U_M^\cV$ denotes the unit ball
$ \{ w \in M: \; \| w\|_\cV = 1\}$ of $M$.
The set of approximations to $\Lambda$ is defined
by
\[
  \Lambda_h = \{ \lambda_h \in \RRR: \;\exists 0 \ne u_h \in E_h
  \text{ satisfying } a(u_h, v_h) = \lambda_h (u_h, v_h) \text{ for
    all } v_h \in E_h\}.
\]
In other words, $\Lambda_h$ is the set of Ritz values of the compression of $A$ on $E$.
The sets $\Lambda$ and $\Lambda_h$ are compared using \revv{the} Hausdorff
distance. We recall that the Hausdorff distance between two subsets
$\Upsilon_1, \Upsilon_2 \subset \RRR$ is defined by
\[
\revv{
\hdist( \Upsilon_1, \Upsilon_2) = 
\max 
\left[
\sup_{\mu_1 \in \Upsilon_1} \hdist( \mu_1, \Upsilon_2), 
\sup_{\mu_2 \in \Upsilon_2} \hdist( \mu_2, \Upsilon_1)
\right],
}
\]
where $\hdist(\mu, \Upsilon) = \inf_{\nu \in \Upsilon} | \mu - \nu|$ for any 
$\Upsilon \subset \R.$  Finally, let $C_E$
denote any finite positive constant
satisfying
$a(e_1, e_2) \le C_E \| e_1 \|_\cH \| e_2 \|_{\cH}$ for all $e_1, e_2
\in E$. 
We are now ready to state collectively the following results proved
in~\cite{GopalGrubiOvall18}. 

\begin{theorem}
  \label{thm:abstract}
  Suppose Assumptions~\ref{asm:rN}--\ref{asm:Rlim} hold.
  Then there
  are constants $C_N, h_0>0$ such that, for all $h < h_0$, 
  \begin{gather}
    \label{eq:Eh_ell_to_Eh}
    \lim_{\ell \to \infty} \gap_\cV( \Eh \ell, E_h)  = 0, 
    \\
    \label{eq:E_to_Eh}
    \lim_{h\to 0} \gap_\cV (E, E_h)   = 0,
    \\
    \label{eq:gap_E_Eh}
    \gap_\cV( E, E_h) \le C_N W \max_{k=0,\ldots, {N-1}} 
    \left\| \big[  R(z_k) - R_h(z_k) \big]
      \raisebox{-0.1em}{\ensuremath{\big|_E}} \right\|_\cV.
  \end{gather}
  If, in addition, Assumption~\ref{asm:Vh_in_dom(a)} holds and
  $\| u \|_\cV = \| |A|^{1/2} u \|_\cH$, then there are $C_1, h_1>0$
  such that for all $h < h_1,$
  \begin{gather}
    \label{eq:dist_L_Lh}
    \hdist( \Lambda, \Lambda_h) \le (\lmax)^2 \gap_\cV(E, E_h)^2
    + 
    C_1 C_E \, \gap_\cH(E, E_h)^2,
  \end{gather}
  where $\lmax = \sup_{e_h \in E_h} \| |A|^{1/2} e_h\|_{\cH} / \| e_h
  \|_\cH$ satisfies 
  $
  (\lmax)^2\le \left[ 1-\gap_\cV(E, E_h) \right]^{-2} C_E.
  $
\end{theorem}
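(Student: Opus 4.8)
The plan is to establish the four conclusions along three separate lines, since they are of three different natures: \eqref{eq:Eh_ell_to_Eh} is a fixed-$h$ subspace-iteration statement, \eqref{eq:E_to_Eh}--\eqref{eq:gap_E_Eh} compare the discretized and exact spectral projectors, and \eqref{eq:dist_L_Lh} is a Rayleigh--Ritz eigenvalue estimate (all are established in \cite{GopalGrubiOvall18}; I recapitulate the argument). The first two lines use only the bounded operators $S_N$ and $S_N^h$, and, since $S_N^h$ need not be selfadjoint, I would run them through Riesz-projection and subspace-iteration arguments rather than spectral-theorem shortcuts, while the third returns to the selfadjoint form $a(\cdot,\cdot)$. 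For \eqref{eq:Eh_ell_to_Eh}: by the spectral mapping theorem applied to $S_N=r_N(\cA)$, Assumption~\ref{asm:rN} ($\hat\kappa<1$ together with \eqref{SpectralGap}) makes $\{\mu_1,\dots,\mu_m\}=r_N(\Lambda)$ a strictly dominant part of $\Spec(S_N)$: every other point of $\Spec(S_N)$ — including $w_N$ and the image of the essential spectrum — has modulus at most $\hat\kappa\min_i|\mu_i|$. By Assumption~\ref{asm:Rlim}, $\|S_N^h-S_N\|_\cV\to0$, so for small $h$ the eigenvalues of $S_N^h$ lie within $o(1)$ of $\Spec(S_N)$ and $S_N^h$ inherits the strict domination with ratio $\hat\kappa+o(1)<1$; the classical subspace-iteration convergence estimate, applicable because the standing hypothesis $\dim P_h\Eh0=m$ makes $P_h|_{\Eh0}$ injective, then yields geometric convergence $\gap_\cV(\Eh\ell,E_h)\to0$.

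For \eqref{eq:gap_E_Eh}, whence \eqref{eq:E_to_Eh}: I would write $P=\tfrac1{2\pi\ii}\oint_\Theta(z-S_N)^{-1}\,dz$, which by the above equals the exact spectral projector $S$ onto $E$, so $\ran P=E$ and $(z-S_N)^{-1}$ leaves $E$ invariant. Applying the resolvent identity in the order
\[
  (z-S_N)^{-1}-(z-S_N^h)^{-1}=(z-S_N^h)^{-1}\,(S_N-S_N^h)\,(z-S_N)^{-1}
\]
and integrating over $\Theta$ gives $(P-P_h)|_E=\tfrac1{2\pi\ii}\oint_\Theta(z-S_N^h)^{-1}(S_N-S_N^h)(z-S_N)^{-1}|_E\,dz$, in which the rightmost factor maps $E$ into $E$. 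Since $S_N-S_N^h=\sum_{k=0}^{N-1}w_k[R(z_k)-R_h(z_k)]$, one has $\|(S_N-S_N^h)|_E\|_\cV\le W\max_k\|[R(z_k)-R_h(z_k)]|_E\|_\cV$, and combining with uniform bounds on the two resolvents for $z\in\Theta$ and $h<h_0$ gives $\|(P-P_h)|_E\|_\cV\le C_N W\max_k\|[R(z_k)-R_h(z_k)]|_E\|_\cV$, with $C_N$ depending on $\Theta$ (hence on $N$) but not on $h$. Because $\dim E=\dim E_h=m$ and $\|P-P_h\|<1$ for small $h$, the map $P_h|_E:E\to E_h$ is an isomorphism, and the last estimate converts into one of the same order for $\gap_\cV(E,E_h)$, which is \eqref{eq:gap_E_Eh}; then \eqref{eq:E_to_Eh} is immediate since the right-hand side tends to $0$ by Assumption~\ref{asm:Rlim}.

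For \eqref{eq:dist_L_Lh}: for small $h$ the gap $\gap_\cH(E,E_h)$ is also below $1$ (norm equivalence on the fixed finite-dimensional $E$, together with boundedness of $\lmax$), so $E_h$ is the graph $\{x+Gx:x\in E\}$ of a bounded operator $G:E\to E^{\perp_{\cH}}$ with $\|Gx\|_\cH\le c\,\gap_\cH(E,E_h)\|x\|_\cH$ and $\|Gx\|_\cV\le c\,\lmax\,\gap_\cV(E,E_h)\|x\|_\cH$. Transporting the Ritz generalized eigenproblem on $E_h$ back to $E$ via $x\mapsto x+Gx$ turns it into the generalized eigenproblem for the pair of forms $\big(a(\cdot,\cdot)|_E+a(G\cdot,G\cdot),\ \langle\cdot,\cdot\rangle_\cH|_E+\langle G\cdot,G\cdot\rangle_\cH\big)$, a perturbation of the diagonal pair $(\diag(\lambda_1,\dots,\lambda_m),I)$; crucially, the first-order (linear-in-$G$) terms vanish because $G$ ranges in $E^{\perp_{\cH}}$, which, as $E$ is invariant under $\cA$ and $|\cA|$, is simultaneously $a$-orthogonal and $\cV$-orthogonal to $E$. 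The surviving quadratic perturbations have sizes $\le(\lmax)^2\gap_\cV(E,E_h)^2$ (from $a(G\cdot,G\cdot)$, using $|a(v,v)|\le\||\cA|^{1/2}v\|_\cH^2=\|v\|_\cV^2$) and $\le c\,C_E\gap_\cH(E,E_h)^2$ (from the mass-form perturbation, using $\max_i|\lambda_i|\le C_E$); standard perturbation bounds for Hermitian generalized eigenproblems, together with the dimension count $\dim E_h=m$ to pair eigenvalues, then yield \eqref{eq:dist_L_Lh}. Finally, the bound on $\lmax$ follows from the same decomposition $e_h=x+Gx$ with $\|e_h\|_\cH=1$: one gets $\|e_h\|_\cV^2=\|x\|_\cV^2+\|Gx\|_\cV^2\le C_E+\gap_\cV(E,E_h)^2\|e_h\|_\cV^2$, and solving gives $(\lmax)^2\le(1-\gap_\cV(E,E_h)^2)^{-1}C_E\le(1-\gap_\cV(E,E_h))^{-2}C_E$.

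The step I expect to be the main obstacle is the last one. The eigenvalue bound is genuinely quadratic in the gaps only because the first-order perturbation term is annihilated, which forces one to represent $E_h$ as the graph over $E$ of an operator $G$ landing in $E^{\perp_{\cH}}$ — so that the correction is orthogonal to $E$ in $\cH$, in $\cV$, and with respect to $a$ simultaneously — and only because the two remaining quadratic contributions are each measured in their natural norm: the ``energy'' term against $\cV$, at the price of the factor $(\lmax)^2$, which must itself be controlled in terms of $\gap_\cV(E,E_h)$ and $C_E$, and the ``mass'' term against $\cH$. Collapsing both into the coarser $\cV$-estimate would lose the rate, so keeping the two norms separate is the technical heart of the matter. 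A secondary and more mechanical difficulty is that $S_N^h$ is not selfadjoint, so the first two lines of argument must use non-normal subspace-iteration and Riesz-projection theory, and the restriction-to-$E$ form of \eqref{eq:gap_E_Eh} hinges on ordering the factors in the resolvent identity so that the one acting first on $E$ is $(z-S_N)^{-1}$, which preserves $E$.
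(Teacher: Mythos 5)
The paper does not actually prove Theorem~\ref{thm:abstract}: it is stated ``collectively'' as a restatement of results established in \cite{GopalGrubiOvall18}, so there is no in-paper argument to compare against line by line. Judged on its own terms, your reconstruction is sound and follows the route one would expect that reference to take: spectral mapping plus norm-resolvent convergence and dominant-subspace iteration for \eqref{eq:Eh_ell_to_Eh}; the second resolvent identity inside the Riesz integral, ordered so that $(z-S_N)^{-1}$ acts first and preserves $E$, for the restricted-to-$E$ form of \eqref{eq:gap_E_Eh}; and a graph representation $E_h=\{x+Gx\}$ with $G$ mapping into $E^{\perp_{\cH}}$ so that the first-order terms in the transported Ritz problem cancel, for the quadratic estimate \eqref{eq:dist_L_Lh}. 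Two points deserve to be made explicit rather than left implicit. First, the resolvent-identity step only bounds the one-sided quantity $\sup_{e\in U_E^\cV}\dist{\cV}(e,E_h)$; upgrading this to the symmetric $\gap_\cV(E,E_h)$ requires the equal-dimension argument via \cite[Theorem~I.6.34]{Kato95} (if a one-sided gap between equidimensional finite-dimensional subspaces is below $1$, the two one-sided gaps coincide) --- this is exactly the device the present paper deploys later in the proof of Theorem~\ref{thm:total} to pass from $\delta_h^\cH$ to $\gap_\cH(E,E_h)$, and it is needed here for the same reason. Second, your decomposition $\|e_h\|_\cV^2=\|x\|_\cV^2+\|Gx\|_\cV^2$ and the bound $\|Gx\|_\cV\le\gap_\cV(E,E_h)\,\|e_h\|_\cV$ both rest on the observation that the $\cH$-orthogonal projection of $e_h$ onto $E$ is simultaneously the $\cV$-orthogonal (and $a$-orthogonal) projection, because $E$ is a spectral subspace of $A$ and hence of $|A|^{1/2}$; this is the crux that makes the cross terms vanish and should be stated, not assumed. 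With those two points spelled out, the argument delivers the stated constants, and your derivation even gives the slightly sharper $(\lmax)^2\le\bigl[1-\gap_\cV(E,E_h)^2\bigr]^{-1}C_E$, which implies the bound quoted in the theorem.
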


\section{Application to a DPG discretization}\label{DPG}

In this section, we apply the abstract framework of the previous
section to obtain convergence rates for eigenvalues and eigenspaces
when the DPG discretization is used to approximate
the resolvent of a model operator.

\subsection{The Dirichlet operator}

Throughout this section, we  set $\cH, \cV,$ and $A$ by 
\begin{equation}
  \label{eq:VHA_Dirichlet}
{  \cH = L^2(\om), \quad
  \cA = -\Delta, \quad
  \dom(\cA) = \{ \psi\in H_0^1(\om):~\Delta\psi\in L^2(\om)\} ,\quad
  \cV = H_0^1(\om),}
\end{equation}
where $\om$ $\subset \RR^n$ ($n\ge 2$) is a bounded polyhedral domain
with Lipschitz boundary.  We shall use standard notations for
norms ($ \| \cdot \|_X$) and seminorms ($|\cdot|_X$) on Sobolev
spaces~($X$).  It is easy to see \cite{GopalGrubiOvall18} that
Assumption~\ref{asm:Vshort} holds with these settings.  Note that the
operator $A$ in~\eqref{eq:VHA_Dirichlet} is the operator associated to the form
\[
a(u,v) = \int_\om \grad u \cdot \grad \overline{v} \; dx, \quad 
u, v \in \dom(a) = \cV = H_0^1(\om)
\]
and that the norm $\| u \|_\cV,$ due to the \Poincare\ inequality, is
equivalent to $\| |A|^{1/2} u \|_\cH $ $ = \| A^{1/2} u \|_\cH$ $= \| \grad u \|_{L^2(\om)} $
$= |u|_{H^1(\om)}$.

The \revv{solution of} the operator equation $(z - \cA ) u = v$ yields the
application of the resolvent $u = R(z) v$.
\revv{The weak form of this equation}
may be stated as the problem of finding $u \in H_0^1(\om)$
satisfying
\begin{equation}
	\label{eq:Rzv-weak}
	b(u,w) = (v,w)_\cH
\qquad \text{ for all } w\in H_0^1(\om), 
\end{equation}
where
\[
\revv{b(w_1,w_2) = z(w_1,w_2)_\cH - a(w_1,w_2)}
\]
for any $w_1, w_2 \in H_0^1(\om)$.
As a first step in the analysis, we obtain an inf-sup estimate and a
continuity estimate for $b$. In the ensuing lemmas $z$ is tacitly
assumed to be in the resolvent set of $A$.

\begin{lemma}
	\label{lem:dirichlet}
	For all $v
	\in H_0^1(\om)$,
\begin{align*}
\revv{
\sup_{y \in H_0^1(\om)} 
  \frac{| b(v,y) |}{ \quad| y |_{H^1(\om)}}\geq
  \beta(z)^{-1} | v |_{H^1(\om)},
}
\end{align*}
where $\beta(z)=\sup\{|\lambda|/|\lambda-z|:\;\lambda\in\Sigma(A)\}$.
\end{lemma}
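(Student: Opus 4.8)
The plan is to use that $A=-\Delta$ with Dirichlet conditions on a bounded Lipschitz domain has compact resolvent, hence a discrete spectrum $\Sigma(A)=\{\lambda_j\}_{j\ge 1}$ with $0<\lambda_1\le\lambda_2\le\cdots\to\infty$ and an $L^2(\om)$-orthonormal basis of eigenfunctions $\{\phi_j\}$. Writing the given $v\in H_0^1(\om)$ as $v=\sum_j c_j\phi_j$ with $c_j=(v,\phi_j)_\cH$, one has $|v|_{H^1(\om)}^2=a(v,v)=\sum_j\lambda_j|c_j|^2$, and for any $y=\sum_j d_j\phi_j\in H_0^1(\om)$ the form $b$ diagonalizes, since $a(\phi_i,\phi_j)=\lambda_j\delta_{ij}$ and $(\phi_i,\phi_j)_\cH=\delta_{ij}$:
\[
 b(v,y)=z(v,y)_\cH-a(v,y)=\sum_j(z-\lambda_j)\,c_j\,\overline{d_j}.
\]
So the supremum on the left of the claimed inequality is just a dual norm, and the task reduces to exhibiting a good choice of $y$.

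The key step is to pick the near-optimal test function $y:=(zA^{-1}-I)v=\sum_j\frac{z-\lambda_j}{\lambda_j}c_j\phi_j$. This $y$ indeed belongs to $H_0^1(\om)=\dom(A^{1/2})$, because $|z-\lambda_j|^2/\lambda_j$ grows no faster than a fixed multiple of $\lambda_j$ as $j\to\infty$, so $|y|_{H^1(\om)}^2=\sum_j\lambda_j|d_j|^2=\sum_j|z-\lambda_j|^2\lambda_j^{-1}|c_j|^2<\infty$. Substituting $d_j=\frac{z-\lambda_j}{\lambda_j}c_j$ into the diagonalized form produces the identity
\[
 b(v,y)=\sum_j\frac{|z-\lambda_j|^2}{\lambda_j}\,|c_j|^2=|y|_{H^1(\om)}^2,
\]
with both sides finite and nonnegative; in particular $b(v,y)$ is real. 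Hence $|b(v,y)|/|y|_{H^1(\om)}=|y|_{H^1(\om)}=\big(\sum_j|z-\lambda_j|^2\lambda_j^{-1}|c_j|^2\big)^{1/2}$, and it only remains to compare this with $|v|_{H^1(\om)}$.

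That comparison is immediate from the definition of $\beta(z)$: for every $\lambda_j\in\Sigma(A)$ one has $|\lambda_j|/|\lambda_j-z|\le\beta(z)$, hence $|z-\lambda_j|^2/\lambda_j\ge\lambda_j/\beta(z)^2$; multiplying by $|c_j|^2$ and summing gives $\sum_j|z-\lambda_j|^2\lambda_j^{-1}|c_j|^2\ge\beta(z)^{-2}\sum_j\lambda_j|c_j|^2=\beta(z)^{-2}|v|_{H^1(\om)}^2$. Therefore the displayed supremum is at least $|y|_{H^1(\om)}\ge\beta(z)^{-1}|v|_{H^1(\om)}$, which is the assertion (the case $v=0$ being trivial, and $y\ne 0$ whenever $v\ne 0$ since $z\notin\Sigma(A)$). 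I do not expect a real obstacle here: the only points needing a line of justification are that $\beta(z)<\infty$ (because $z$ is in the resolvent set, so $\mathrm{dist}(z,\Sigma(A))>0$, while $|\lambda|/|\lambda-z|\to 1$ as $\lambda\to\infty$) and that the chosen $y$ lies in $H_0^1(\om)$. If one wishes to avoid invoking discreteness of the spectrum, the identical computation carries over verbatim with the spectral measure of $A$ replacing the eigenfunction expansion, $y=(zA^{-1}-I)v$ being defined through the bounded functional calculus.
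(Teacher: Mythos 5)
Your proof is correct, and it reaches the inf-sup bound by a route that differs from the paper's in its details though not in its overall strategy: both arguments construct an explicit near-optimal test function from the functional calculus of $A$ and then verify the quotient bound directly. The paper sets $w=\overline z R(\overline z)v$ and tests with $v-w=-AR(\overline z)v$, obtaining the identity $b(v,v-w)=-|v|_{H^1(\Omega)}^2$ and then the bound $|v-w|_{H^1(\Omega)}\le\beta(z)\,|v|_{H^1(\Omega)}$ from Kato's operator-norm identity $\|AR(z)\|_{\cH}=\beta(z)$ together with the commutation of $A^{1/2}$ with $AR(\overline z)$. You instead test with $y=(zA^{-1}-I)v$, whose spectral coefficients are (up to conjugation and sign) the reciprocals of those of the paper's test function, and you obtain the dual identity $b(v,y)=|y|_{H^1(\Omega)}^2$ together with the lower bound $|y|_{H^1(\Omega)}\ge\beta(z)^{-1}|v|_{H^1(\Omega)}$, read off coefficientwise from the definition of $\beta(z)$. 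Your version is more self-contained --- it needs only the eigenfunction expansion of the Dirichlet Laplacian, and you correctly flag the two points requiring a line of justification ($y\in H_0^1(\Omega)$ and $\beta(z)<\infty$) as well as the spectral-measure formulation that removes any reliance on discreteness of the spectrum. What the paper's operator-theoretic phrasing buys is portability: the same template (test with $v-w$, $w$ built from the resolvent) is reused almost verbatim for the additively perturbed operator in Subsection~\ref{ssec:gener}, where your identity $b(v,y)=|y|_{H^1(\Omega)}^2$ would no longer hold as stated, since it uses that $A$ is exactly the operator inducing the $H^1$ seminorm.
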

\begin{proof}
  Let $v \in H_0^1(\om)$ be non-zero, and let
  $w = \overline{z}R(\overline{z})v$. Then
  \[
    b(s,w)=z(s,v)_\cH, \qquad \text{ for all }
    s\in H_0^1(\om).
  \]
  Choosing $s=v$, it follows immediately that
  \begin{equation}
    \label{eq:2}
    b(v, v-w) = b(v,v) - z \| v \|^2_{L^2(\om)} = -|v |_{H^1(\om)}^2.    
  \end{equation}
  Moreover,
  $v-w=(I-\overline{z}R(\overline{z}))v=-A\, R(\overline{z})
  v$. Recall that the identity $\| A R(z) \|_\cH = \beta(z)$
  holds~\cite[p.~273, Equation (3.17)]{Kato95} for any $z$ in the
  resolvent set of $A$. Since $|s|_{H^1(\om)} = \| A^{1/2}s\|_\cH$ for all
  $s \in H_0^1(\om)=\dom(a)=\dom(A^{1/2})$, and since $A^{1/2}$
  commutes with $A R(z)$, we conclude that
  \begin{equation}
    \label{eq:3}
    |v - w|_{H^1(\om)}
    = |A R(\overline{z}) v|_{H^1(\om)}
    = \| A R(\overline{z}) A^{1/2} v \|_\cH \le \beta(\overline{z}) \| A^{1/2} v
    \|_\cH =
     \beta({z}) |v|_{H^1(\om)}~,
  \end{equation}
  where $\beta(\overline{z})=\beta(z)$ because the spectrum is real.
  It follows from~\eqref{eq:2} and~\eqref{eq:3} that 
	\begin{align*}
		\sup_{y \in H_0^1(\om)} 
		\frac{| b(v, y) |}{ \quad| y |_{H^1(\om)}} 
		& \ge 
		\frac{| b(v,v-w) |}{ \quad| v-w |_{H^1(\om)}} 
                   \ge 
		\frac{ |v|_{H^1(\om)}^2 }{ \beta(z)  | v |_{H^1(\om)}}~,
	\end{align*}
	as claimed.
\end{proof}

\subsection{The DPG resolvent discretization}  \label{ssec:dpg}

We now assume that $\om$ is partitioned by a conforming simplicial
finite element mesh $\oh$. As is usual in finite element theory, while
the mesh need not be regular, the shape regularity of the mesh is
reflected in the estimates. 

To describe the DPG discretization of $z-A$, we begin by introducing
the nonstandard variational formulation on which it is based. We will
be brief as the method is described in detail in previous
works~\cite{DemkoGopal11,DemkoGopal13a}.  
Define
\[
H^1(\oh) = \prod_{K \in \oh} H^1(K),
\qquad
Q = H(\div, \om) / \prod_{K \in \oh} H_0(\div, K), 
\]
normed respectively by
\[
  \| v \|_{H^1(\oh)} = \left(
    \sum_{K \in \oh} \| v \|_{H^1(K)}^2
  \right)^{1/2},
  \qquad
  \| q \|_Q
  = \inf\left\{ \| q -  q_0\|_{H(\div, \om)}:
    \;
    q_0 \in \displaystyle{ \prod_{K \in \oh}}    H_0(\div, K)\right\}.
\]
On every mesh element $K$ in $\oh$, the trace $q\cdot n|_{\partial K}$
is in $H^{-1/2}(\partial K)$ for any $q $ in $H(\div,K)$.
\revv{Above, $H_0(\div, K) = \{ q \in H(\div, K) : \left. q
    \cdot n \right|_{\partial K} = 0 \}$.}
We denote by
$\ip{q\cdot n, v}_{\partial K}$ the action of this functional on the
trace $v|_{{\partial} K}$ for any $v$ in $H^1(K)$.  Next, for any
$u \in H_0^1(\om)$, $q \in Q$ and $v \in H^1(\oh)$, set
\[
b_h( (u,q), v) 
= \sum_{K \in \oh} 
\left[
  \ip{ q \cdot n, \bar v}_{\partial K}
  + \int_K \left( z u \bar v - \grad u \cdot \grad \bar v \right)\, \rev{dx}
\right].
\]
This sesquilinear form gives rise to a well-posed Petrov-Galerkin
formulation, as will be clear from the discussion below.

For the DPG discretization, we use the following finite element
subspaces. Let $L_h$ denote the Lagrange finite element subspace of
$H_0^1(\om)$ consisting of continuous functions, which when restricted
to any $K$ in $\oh,$ are in $P_{p}(K)$ for some $p\ge 1$.  Here and
throughout, $P_\ell(K)$ denotes the set of polynomials of total degree
at most~$\ell$ restricted to $K$. Note that when applying the earlier abstract
framework to the DPG discretization, in addition
to~\eqref{eq:VHA_Dirichlet}, we also set
\begin{equation}
  \label{eq:Vh_DPG}
  \cV_h = L_h.
\end{equation}
Let $\RT_h \subset H(\div, \om)$
denote the well-known Raviart-Thomas finite element subspace
consisting of functions whose restriction to any $K \in \oh$ is a
polynomial in \revv{$P_{p-1}(K)^n + x P_{p-1}(K)$}, where $x$ is the
coordinate vector. Then we set
$Q_h= \{ q_h \in Q: \; q_h |_K \in \revv{P_{p-1}(K)^n + x P_{p-1}(K)} + H_0(\div, K)\}$.
Finally, let $Y_h \subset H^1(\oh)$ consist of functions \revv{which,
when restricted to any $K\in \oh$,} lie in $P_{p+n+1}(K)$.

We now define the approximation of the resolvent action $u = R(z)f$ by
the DPG method, denoted by $u_h = R_h(z) f$, for any $f \in L^2(\om)$.
The function $u_h$ is in $L_h$. Together with $\veps_h \in Y_h$ and
$q_h \in Q_h$, it satisfies
\begin{subequations}
	\label{eq:dpg-eq}
	\begin{align}
          ( \veps_h,  \eta_h)_{H^1(\oh)} \, 
		+ b_h (( u_h, q_h), \eta_h) 
		& = \int_\om f \, \bar\eta_h \, \rev{dx},
		&& \text{ for all } \eta_h \in Y_h,
		\\
		b_h( ( w_h, r_h), \veps_h) & = 0, 
		&& \text{ for all } w_h \in L_h, \; r_h \in Q_h.
	\end{align}
 \end{subequations}
 where
 \[
   \revv{
   ( \veps_h, \eta_h)_{H^1(\oh)}  =
   \sum_{K \in \oh} \int_K
   ( \veps_h \bar{\eta}_h + \grad \veps_h \cdot \grad \bar{\eta}_h )
   \,\rev{dx}
   .}
 \]
The distance between $u$ and $u_h$ is bounded in the next
result. There and in similar results in
the remainder of this section, we tacitly understand  $z$ to vary in
some bounded  subset $D$ of the resolvent set of $A$ in the complex
plane (containing the contour $\Gamma$)
and write $t_1 \lesssim t_2$ whenever  there is a positive 
constant $C$
satisfying 
$t_1 \le C t_2$
and $C$ is independent of
\[
  h = \max_{K \in \oh}  \diam( K)
\]
but dependent on the diameter of $D$ and the shape regularity of the
mesh $\oh$. The deterioration of the estimates as $z$ gets close to the
spectrum of $A$ is identified using $\beta(z)$ \revv{of Lemma~\ref{lem:dirichlet}}.

\begin{lemma}
	\label{lem:dpg-resolvent}
        For all $f \in L^2(\om)$,
	\[
	\revv{
	\| R(z) f - R_h(z) f \|_\cV \lesssim
	\beta(z)
	\left[
	\inf_{w_h \in L_h} \| u - w_h \|_{H^1(\om)}
	+ 
	\inf_{q_h \in \RT_h} \| q - q_h \|_{H(\div,\om)}
	\right],
	}
	\]
	where $u= R(z) f$ and $q = \grad u$.
\end{lemma}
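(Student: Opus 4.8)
The plan is to recognize the DPG method as a minimum-residual (least-squares) method in a suitable dual norm, derive a quasi-optimality (Céa-type) estimate from the inf-sup condition of Lemma~\ref{lem:dirichlet}, and then bound the best-approximation error by standard finite element interpolation. More precisely, the pair $(u,q) = (R(z)f,\ \grad u)$ solves the ``ultraweak''-style formulation
\[
b_h((u,q),v) = (f,v)_\cH \qquad \text{for all } v \in H^1(\oh),
\]
which is obtained from $(z-A)u = f$ by testing elementwise against broken test functions and integrating by parts, with $q\cdot n$ on the element boundaries representing the (single-valued, hence in $Q$) flux $\grad u \cdot n$. The system~\eqref{eq:dpg-eq} is the standard DPG saddle-point realization of the ideal Petrov--Galerkin scheme that minimizes the residual of this equation measured in the dual norm of $H^1(\oh)$, with $\veps_h$ playing the role of the Riesz representative of the residual and $Y_h$ the (enriched) test space; because $\dim P_{p+n+1}(K)$ is chosen large enough, the Fortin-type compatibility that makes the practical DPG scheme inherit the stability of the ideal one holds on shape-regular meshes (this is the content of~\cite{DemkoGopal11,DemkoGopal13a}, which I would cite rather than reprove).

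The key step is to transfer the inf-sup bound of Lemma~\ref{lem:dirichlet} into a stability constant for $b_h$ on the \emph{broken} trial space $L_h \times Q_h$ against the broken test space $H^1(\oh)$. The relevant statement is that for all $(w,r) \in H_0^1(\om)\times Q$,
\[
\sup_{v \in H^1(\oh)} \frac{|b_h((w,r),v)|}{\|v\|_{H^1(\oh)}}
\ \gtrsim\ \beta(z)^{-1}\bigl(\, |w|_{H^1(\om)} + \|r\|_{Q}\,\bigr),
\]
which one proves by combining the known $\beta(z)$-independent stability of the broken form for the Laplacian-type operator (the ``trace'' part controlling $\|r\|_Q$ comes for free from the structure of $Q$ and of $b_h$) with the single-element inf-sup estimate of Lemma~\ref{lem:dirichlet} used to control the field variable $|w|_{H^1(\om)}$; the factor $\beta(z)$ is exactly what measures the indefiniteness/closeness of $z$ to $\Sigma(A)$. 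Continuity of $b_h$ in the same norms is elementary (Cauchy--Schwarz on each element plus the trace pairing bound $|\ip{r\cdot n,v}_{\partial K}| \le \|r\|_{H(\div,K)}\|v\|_{H^1(K)}$), with a $z$-independent constant on the bounded set $D$. Given stability and continuity, the abstract theory of minimum-residual methods yields the quasi-optimal estimate
\[
\|u - u_h\|_{H^1(\om)} + \|q - q_h\|_{Q}
\ \lesssim\ \beta(z)\Bigl(\inf_{w_h \in L_h}\|u - w_h\|_{H^1(\om)} + \inf_{r_h \in Q_h}\|q - r_h\|_{Q}\Bigr),
\]
since the consistency (Galerkin orthogonality) of~\eqref{eq:dpg-eq} against $L_h\times Q_h$ follows from the exact solution satisfying $b_h((u,q),v)=(f,v)$ for every $v\in H^1(\oh)\supseteq Y_h$.

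To finish, I would replace the abstract best-approximation quantities by their interpolation counterparts. The term $\inf_{w_h\in L_h}\|u-w_h\|_{H^1(\om)}$ is already in the desired form. For the flux, since $q=\grad u \in H(\div,\om)$ (because $\div q = \Delta u = zu - f \in L^2$) and $Q_h$ contains $\RT_h + \prod_K H_0(\div,K)$, the quotient norm gives
\[
\inf_{r_h \in Q_h}\|q - r_h\|_Q \ \le\ \inf_{q_h \in \RT_h}\|q - q_h\|_{H(\div,\om)},
\]
because any element of $\prod_K H_0(\div,K)$ is a legitimate correction inside the infimum defining $\|\cdot\|_Q$ and does not change the $H(\div,\om)$-admissible representatives. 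Combining the last three displays gives exactly the claimed bound, recalling from the remarks after~\eqref{eq:VHA_Dirichlet} that $\|\cdot\|_\cV$ and $|\cdot|_{H^1(\om)}$ are equivalent by \Poincare.

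The main obstacle is the second step: establishing the $\beta(z)$-weighted inf-sup stability of the broken form $b_h$ on $L_h\times Q_h$ against the \emph{discrete} test space $Y_h$, rather than the full $H^1(\oh)$. This requires a Fortin operator $\Pi_h : H^1(\oh)\to Y_h$ that is uniformly bounded in $\|\cdot\|_{H^1(\oh)}$ (shape-regularly, $h$-independently) and satisfies $b_h((w_h,r_h),\, v - \Pi_h v) = 0$ for all discrete trial functions — the reason $Y_h$ uses the inflated degree $p+n+1$. Tracking that this Fortin construction from~\cite{DemkoGopal11,DemkoGopal13a} is compatible with our $z$-dependent form (only the zeroth-order term $z\int_K u\bar v$ is new, and it is handled by the $L^2$-stability built into the enriched test space) is the delicate point; once it is in place, the rest is bookkeeping.
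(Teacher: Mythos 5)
Your proposal follows essentially the same route as the paper's proof: the paper likewise verifies the inf-sup condition for the broken form $b_h$ (via the trace-norm duality for the flux pairing and a transfer theorem that upgrades the unbroken inf-sup of Lemma~\ref{lem:dirichlet} to the broken setting), checks continuity, invokes the Fortin operator for the degree-$(p+n+1)$ enriched test space, and then applies the DPG quasi-optimality theorem of Gopalakrishnan--Qiu to get the $O(\beta(z))$-weighted best-approximation bound. The only differences are organizational: the paper outsources the broken inf-sup transfer and the Fortin construction to specific cited results rather than sketching them, and the reduction from the $Q$-norm best approximation to the $\RT_h$ best approximation in $H(\div,\om)$ is already built into the cited quasi-optimality statement.
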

\begin{proof}
  The proof proceeds by verifying the sufficient conditions for
  convergence of DPG methods known in the existing literature.  The
  result of~\cite[Theorem~2.1]{GopalQiu14} immediately gives the
  stated result, provided we verify its three conditions, reproduced
  below in a form convenient for us.
  The first two conditions there, taken together, is equivalent
  to the bijectivity of the operator generated by $b_h(\cdot,\cdot)$.
  Hence we shall state them in the following alternate form (dual to
  the form stated in~\cite{GopalQiu14}). The first 
  is the uniqueness condition
  \begin{subequations}    \label{eq:A}
    \begin{gather}
      \label{eq:A1}
      \{ \eta \in H^1(\oh) : \; b_h((w,r), \eta) = 0,\;
      \text{ for all } (w,r) \in H_0^1(\om) \times Q \} = \{ 0 \}.
    \end{gather}
    The second condition is that there are  $C_1, C_2 >0$ such that
    \begin{equation}
      \label{eq:A2}
      C_1 \big[ |w|_{H^1(\om)}^2 + \| r \|_Q^2\big]^{1/2}
      \le
      \sup_{ \eta \in H^1(\oh)}
      \frac{ |b_h ( (w,r), \eta) | }{ \| \eta\|_{H^1(\oh)}
      }
      \le C_2 \big[ |w|_{H^1(\om)}^2 + \| r \|_Q^2\big]^{1/2}
    \end{equation}
    for all $w \in H_0^1(\om)$ and $r \in Q.$
    Finally, the third condition is the existence of a bounded linear operator
    $\varPi_h : H^1(\oh) \to Y_h$ such that 
    \begin{equation}
      \label{eq:A3}
      b_h( (w_h, r_h), \eta - \varPi_h \eta) =0.
    \end{equation}
  \end{subequations} 
  Once these conditions are verified,
  \cite[Theorem~2.1]{GopalQiu14} implies 
  \begin{equation}
    \label{eq:4}
    |u - u_h |_{H^1(\om)}
    \revv{\le}
    \frac{C_2 \| \varPi\| }{C_1}
    \left[
      \inf_{w_h \in L_h} | u - w_h |_{H^1(\om)}
      + 
      \inf_{q_h \in \RT_h} \| q - q_h \|_{H(\div,\om)}
    \right]
  \end{equation}
  with $u=R(z)f$ and $u_h = R_h(z) f$.

  It is possible to verify conditions~\eqref{eq:A1} and~\eqref{eq:A2}
  on $b_h(\cdot,\cdot)$ using the properties of $b(\cdot,\cdot)$.
  First note that~\cite[Theorem~2.3]{CarstDemkoGopal16} shows that
  \[
    \sup_{v \in H^1(\oh)} \frac{|\sum_{K \in \oh}\ip{r \cdot n, v}_{\partial K}|}{\|  v \|_{H^1(\oh)}} =
    \| r \|_Q.
  \]
  This, together with~\cite[Theorem~3.3]{CarstDemkoGopal16}  implies
  that the 
  inf-sup condition for $b$ that we proved in
  Lemma~\ref{lem:dirichlet} implies an inf-sup condition for~$b_h$,
  namely the lower inequality of~\eqref{eq:A2} holds with
  \[
    \revv{
    \frac{1}{C_1^2}
    = \beta(z)^2 +
    \left[ \beta(z)(1+|z|) +1 \right]^2 .
    }
  \]
  \revv{By combining this with} the continuity estimate of $b_h$ with $C_2 = 1+|z|$,
  we obtain that $C_2/C_1$ is $O(\beta(z))$. Finally,
  Condition~\eqref{eq:A3} follows from the Fortin operator constructed
  in~\cite[Lemma ~3.2]{GopalQiu14} whose norm is a constant bounded
  independently of~$z$. Hence the lemma follows from~\eqref{eq:4}.
\end{proof}

\begin{remark}  \label{rem:Pih}
  Note that the degree of functions in $Y_h$ was chosen to be $p+n+1$
  in order to satisfy the moment condition
  \[
    \int_K ( \eta - \varPi_h \eta ) w_p \, \rev{dx}= 0 
  \]
  for all $w_p \in P_p(K)$ and $\eta \in H^1(K)$ on all mesh simplices
  $K$ (see~\cite{GopalQiu14}). This moment condition was used while
  verifying~\eqref{eq:A3}.  Other recent ideas, such as those in
  \cite{BoumaGopalHarb14, CarstHellw18}, may be used to reduce $Y_h$
  without reducing convergence rates, and thus improve
  Lemma~\ref{lem:dpg-resolvent} for specific meshes and
  degrees.
\end{remark}

\begin{remark} \label{rem:nonselfadj}
  The DPG approximation of $u = R(z) f$, given by $u_h = R_h(z) f$,
  satisfies~\eqref{eq:dpg-eq}. We may rewrite~\eqref{eq:dpg-eq} using
  $x_h = (u_h, q_h)$,
  \begin{align*}
    M_h \veps_h + B_h x_h & = f_h\revv{,}\\
    \revv{B_h^* \veps_h} & \revv{= 0.}
  \end{align*}
  We omit the obvious definitions of operators
  $B_h: L_h \times Q_h \to Y_h,$ $M_h : Y_h \to Y_h$, and that of
  $f_h$ (an appropriate projection of $f$). Eliminating $\veps_h$, we
  find that $u_h = R_h(z) f$ is a component of
  \revv{$x_h = (B_h^* M_h^{-1} B_h)^{-1} B_h^* M_h^{-1}f_h$}. Thus, the operator $R_h(z)$
  produced by the DPG discretization need not be selfadjoint even when
  $z$ is on the real line. For the same reason, the filtered operator
  $S_N^h$ produced by the DPG discretization is {\em not generally
    selfadjoint} even when $\{z_k: k=0, \ldots, N-1\}$ has symmetry
  about the real line. \rev{Note that selfadjointness of $S_N^h$ is
    not needed in Theorem~\ref{thm:abstract} to conclude the
    convergence of the eigenvalue cluster at double the convergence
    rate of eigenspace.}
\end{remark}

\subsection{FEAST iterations with the DPG discretization}\label{sec:FEASTDPG}

To approximate \revv{$E \subseteq \cV$}, we apply the filtered subspace
iteration~\eqref{eq:1}. In this subsection, we complete the analysis
of approximation of $E$ by $E_h$ and the accompanying eigenvalue
approximation errors.  The analysis is an application of the abstract
results in Theorem~\ref{thm:abstract}. To verify the conditions of
this theorem, we need some elliptic regularity. This is formalized in
the next regularity assumption.

\begin{assumption}
  \label{asm:reg}
  Suppose there are positive constants $\Creg$ and $s$ such that the solution
  $u^f \in \cV$ of the Dirichlet problem $-\Delta u^f = f$ admits the
  regularity estimate
\begin{equation}
	\label{eq:reg}
	\| u^f \|_{H^{1+s} (\om)} \le \Creg \| f \|_\cH
        \quad\text{ for any } f \in \cV.
\end{equation}
\revv{Also suppose} that 
\begin{equation}
  \label{eq:reg-eig}
  \| u^f \|_{H^{1+{s_E}} (\om)} \le \Creg \| f \|_\cH
  \quad\text{ for any } f \in E.
\end{equation}
\revv{(Since $E \subseteq \cV,$ \eqref{eq:reg}
  implies~\eqref{eq:reg-eig}
  with $s$ in
  place of $s_E$, but in many cases~\eqref{eq:reg-eig} holds with
  $s_E$  larger than $s$. This is  the reason for additionally assuming~\eqref{eq:reg-eig}.)}
\end{assumption}

Its well known that if $\om$ is convex, Assumption~\ref{asm:reg} holds
with $s=1$ in~\eqref{eq:reg}. If $\om\subset\RR^2$ is non-convex, with
its largest interior angle at a corner being $\pi/\alpha$ for some
$1/2<\alpha<1$, Assumption~\ref{asm:reg} holds with any positive
$s<\alpha$. These results can be found
in~\cite{Grisv85}, for example. 

\begin{lemma}
  \label{lem:rates}
  Suppose Assumption~\ref{asm:reg} holds. Then, 
  \begin{align}
    \label{eq:R-R_h_on_V}
    \| R(z) f - R_h(z) f \|_\cV
    & \lesssim {\beta(z)^2} h^{\min(p, s, 1)}    \| f \|_\cV,
    && \text{ for all } f \in \cV,
    \\
    \label{eq:R-R_h_on_E}
    \| R(z) f - R_h(z) f \|_\cV
    & \lesssim {\beta(z)^2} h^{\min(p, s_E)}    \| f \|_\cV,
    && \text{ for all } f \in E.
  \end{align}
\end{lemma}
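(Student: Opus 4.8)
The strategy is to combine the DPG error estimate of Lemma~\ref{lem:dpg-resolvent} with the elliptic regularity of Assumption~\ref{asm:reg} and standard polynomial approximation estimates for the Lagrange and Raviart--Thomas spaces, keeping track of the dependence on $\beta(z)$. Write $u = R(z) f$ and $q = \grad u$. Since $A = -\Delta$ and $(z-A)u = f$, the function $u$ is the $H^1_0(\om)$-solution of $-\Delta u = zu - f$, and $zu - f \in \cV$ because $u \in \dom(A) \subseteq \cV$ and $f \in \cV$. Lemma~\ref{lem:dpg-resolvent} thus reduces the task, up to the factor $\beta(z)$, to bounding $\inf_{w_h \in L_h}\| u - w_h\|_{H^1(\om)}$ and $\inf_{q_h \in \RT_h}\|q - q_h\|_{H(\div,\om)}$.

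First I would record operator-norm bounds for $R(z)$. By the spectral theorem, $\|R(z)\|_\cH = \sup_{\lambda\in\Sigma(A)} |\lambda - z|^{-1}$, while $\beta(z) = \sup_{\lambda\in\Sigma(A)} |\lambda|\,|\lambda-z|^{-1} \ge \lambda_1\|R(z)\|_\cH$, where $\lambda_1 = \min\Sigma(A) > 0$ (recall $\Sigma(A)\subset(0,\infty)$ here); hence $\|R(z)\|_\cH \lesssim \beta(z)$. Moreover, since $A^{1/2}$ commutes with $R(z)$ and $|\cdot|_{H^1(\om)} = \|A^{1/2}\cdot\|_\cH$ on $H^1_0(\om)$, we get $|R(z) f|_{H^1(\om)} = \|R(z) A^{1/2} f\|_\cH \le \|R(z)\|_\cH\,|f|_{H^1(\om)}$, i.e.\ $\|R(z)\|_\cV \lesssim \beta(z)$. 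Applying Assumption~\ref{asm:reg} to $u$ (the $H^1_0(\om)$-solution of $-\Delta u = zu - f$, with $zu-f \in \cV$) then gives
\[
\| u \|_{H^{1+s}(\om)} \le \Creg\, \| zu - f \|_\cH \le \Creg\big(|z|\,\|u\|_\cH + \|f\|_\cH\big) \lesssim \beta(z)\,\|f\|_\cV,
\]
using $\|u\|_\cH = \|R(z)f\|_\cH \lesssim \beta(z)\|f\|_\cH \lesssim \beta(z)\|f\|_\cV$ and $\sup_{z\in D}|z| < \infty$; likewise $\|\div q\|_{H^1(\om)} = \|zu - f\|_{H^1(\om)} \lesssim \beta(z)\|f\|_\cV$, using $\|u\|_{H^1(\om)} \lesssim \|R(z)\|_\cV\|f\|_\cV \lesssim \beta(z)\|f\|_\cV$.

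Now for the approximation estimates. For the degree-$p$ Lagrange space, $\inf_{w_h \in L_h}\|u - w_h\|_{H^1(\om)} \lesssim h^{\min(p,s)}\|u\|_{H^{1+s}(\om)}$. For the degree-$(p-1)$ Raviart--Thomas space, choosing an interpolant that commutes with the divergence (so the divergence of the interpolation error equals the $L^2$-projection error of $\div q$ onto discontinuous degree-$(p-1)$ polynomials), $\inf_{q_h \in \RT_h}\|q - q_h\|_{H(\div,\om)} \lesssim h^{\min(p,s)}|q|_{H^s(\om)} + h^{\min(p,1)}|\div q|_{H^1(\om)}$, where $|q|_{H^s(\om)} = |\grad u|_{H^s(\om)} \le \|u\|_{H^{1+s}(\om)}$. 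Taking $h<1$ without loss, every $h$-power above is bounded by $h^{\min(p,s,1)}$, so multiplying by $\beta(z)$ and inserting the bounds of the previous paragraph yields~\eqref{eq:R-R_h_on_V}. For~\eqref{eq:R-R_h_on_E}, if $f\in E$ then $u = R(z)f \in E$ and $\div q = zu - f \in E$ (as $R(z)$ acts diagonally on the eigenbasis spanning $E$), and every element of $E$ lies in $H^{1+s_E}(\om)$ by~\eqref{eq:reg-eig} (each eigenfunction $\psi$ of $A$ satisfies $-\Delta\psi = \lambda\psi \in E$). Repeating the argument with $s_E$ replacing $s$ --- the divergence term now contributing $h^{\min(p,1+s_E)} \le h^{\min(p,s_E)}$ --- gives the second estimate.

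The proof is essentially bookkeeping rather than a new idea. The two points needing care are (i) extracting the second power of $\beta(z)$ from the operator-norm bounds $\|R(z)\|_\cH, \|R(z)\|_\cV \lesssim \beta(z)$, which combine with the $\beta(z)$ already present in Lemma~\ref{lem:dpg-resolvent}; and (ii) treating the Raviart--Thomas term through the commuting-diagram property, which exposes why the generically limited regularity $\div q = zu - f \in H^1(\om)$ is responsible for the cap at $1$ in $\min(p,s,1)$ --- a cap that disappears on $E$, where $\div q$ inherits the higher regularity $s_E$.
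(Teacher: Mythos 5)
Your proof is correct and follows essentially the same route as the paper's: reduce via Lemma~\ref{lem:dpg-resolvent} to best-approximation estimates, apply the elliptic regularity of Assumption~\ref{asm:reg} to $-\Delta u = zu-f$ together with resolvent bounds to control $u$, $q=\grad u$, and $\div q = zu-f$ by $\beta(z)\|f\|_\cV$ (with the cap at $1$ coming from the divergence term), and use the invariance of $E$ under $R(z)$ plus~\eqref{eq:reg-eig} to lift the rate to $\min(p,s_E)$ on $E$. The only cosmetic difference is that you obtain $\|R(z)\|_\cH\lesssim\beta(z)$ from the spectral theorem and $\lambda_1=\min\Sigma(A)>0$, whereas the paper extracts the same bound from the inf-sup estimate of Lemma~\ref{lem:dirichlet} via $\beta(z)^{-1}|u|_{H^1(\om)}\le\|f\|_{H^{-1}(\om)}$; both are valid here.
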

\begin{proof}
  By Lemma~\ref{lem:dpg-resolvent}, the distance between $u = R(z) f$
  and $u_h = R_h(z) f$ can be bounded using standard finite element
  approximation estimates for the Lagrange and Raviart-Thomas spaces,
  to get 
  \begin{equation}
    \label{eq:approx1}
    \| u - u_h \|_{H^1(\om)}
    \lesssim
    \beta(z)
    \bigg[
    h^r |u|_{H^{1+r}(\om)} + h^r |q|_{H^r(\om)} + h^r |\div \,q |_{H^r(\om)}
    \bigg],
    \qquad \text{ for } r \le p,
  \end{equation}
  where $q=\grad u$.  Note that since $u$ satisfies
  $b(u, v) = (f, v)_\cH$ for all $v \in H_0^1(\om)$, by
  Lemma~\ref{lem:dirichlet},
  \begin{equation}
    \label{eq:5}
    \beta(z)^{-1} |u|_{H^1(\om)}
    \le \sup_{y \in H_0^1(\om)}
    \frac{| b( u, y)|}{ |y|_{H_0^1(\om)}}
    =
     \sup_{y \in H_0^1(\om)}
     \frac{| (f, y)_\cH|}{ |y|_{H_0^1(\om)}} = \| f \|_{H^{-1}(\om)}.    
  \end{equation}
  which implies, by the \Poincare\ inequality, 
  \begin{equation}
    \label{eq:6}
    \| u \|_\cH  \lesssim
    |u |_\cV \lesssim
    \beta(z) \| f \|_{H^{-1}(\om)} \lesssim
    \beta(z) \| f \|_\cH.
  \end{equation}
  Applying elliptic regularity  to $ \Delta u = f - zu$, for all $r
  \le s$ and $r\le 1,$
  \begin{align}
    \nonumber 
    | u |_{H^{1+r}(\om)}
    &  \le \Creg ( \| f \|_\cH + |z| \| u \|_\cH)
    && \text{by~\eqref{eq:reg}},
    \\ \label{eq:u-bd}
    &  \lesssim \beta(z)  \| f \|_\cH
    && \text{by~\eqref{eq:6}},
    \\     \label{eq:q-bd}
    | q |_{H^{r}(\om)}
    & =     | \grad u  |_{H^{r}(\om)}
      \lesssim \beta(z)  \|  f \|_\cH,
    && \text{by~\eqref{eq:u-bd}},
    \\ \nonumber 
    |\div\, q|_{H^r(\om)}
    & = | f - zu |_{H^r(\om)}
    \\ \nonumber 
    &\lesssim |f|_{H^r(\om)} + |z|\beta(z) \| f\|_\cH
    && \text{by~\eqref{eq:u-bd}},
    \\ \label{eq:divq-bd}
    & \lesssim \beta(z) \| f \|_\cV
    && \text{since $r \le 1$.}
  \end{align}
  Thus for all $0 \le r \le \min(p, s, 1)$, using the
  estimates~\eqref{eq:u-bd},~\eqref{eq:q-bd} and \eqref{eq:divq-bd}
  in~\eqref{eq:approx1}, we have proven~\eqref{eq:R-R_h_on_V}.

  The proof of~\eqref{eq:R-R_h_on_E} starts off as above using an
  $f \in E$. But now, due to the potentially higher regularity, we are
  able to obtain~\eqref{eq:u-bd} and~\eqref{eq:q-bd} for $r \le s_E$.
  Moreover, as in the proof of~\eqref{eq:divq-bd} above, we find that
  $|\div\, q |_{H^r(\om)} \lesssim \beta(z) \| f \|_{H^r(\om)}$. The
  argument to bound $ \| f \|_{H^r(\om)}$ by $\| f \|_\cV$ now
  requires a slight modification:  since $-\Delta f \in E$, the
  regularity estimate~\eqref{eq:reg-eig} implies
  $\| f \|_{H^{1+r}(\om)} \lesssim \| f \|_{\cH}$. Thus
  \begin{align*}
    |\div\, q|_{H^r(\om)}
    & \lesssim \beta(z) \| f \|_\cV
      && \text{ for  $r \le s_E$,}
  \end{align*}
  i.e., whenever $f \in E$, the
  estimates~\eqref{eq:u-bd},~\eqref{eq:q-bd} and \eqref{eq:divq-bd}
  hold for all $0\le r \le s_E$. Using them in~\eqref{eq:approx1}, the
  proof of~\eqref{eq:R-R_h_on_E} is complete.
\end{proof}

\begin{theorem} 
  \label{thm:total} 
  Suppose Assumption~\ref{asm:rN} (on spectral separation) and
  Assumption~\ref{asm:reg} (on elliptic regularity) hold.  Then,
  there are positive constants $C_0$ and $ h_0$ such that for all
  $h<h_0$, the FEAST iterates $\Eh \ell$ obtained using the DPG
  approximation of the resolvent converge to $E_h$ and
  \begin{align}
    \label{eq:gapEEhDPG}
  \gap_\cV (E, E_h) 
    & \le    \revv{C_0\,  h^{\min(p,s_E)},}
    \\
    \label{eq:LLhDPG}
    \hdist( \Lambda, \Lambda_h) 
    & \le    C_0\,  h^{2\min(p,s_E)}.
  \end{align}
  Here $C_0$ is independent of $h$ (but may  depend on $\beta(z_k)^{2}$, $W,$
  $C_N$, $p$, $\Lambda,$ $\Creg$, and the shape regularity of the
  mesh).
\end{theorem}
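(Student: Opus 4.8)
The plan is to obtain both estimates as a corollary of the abstract Theorem~\ref{thm:abstract}, feeding it the DPG-specific resolvent bounds from Lemma~\ref{lem:rates}. The first step is to fix the norm on $\cV = H^1_0(\om)$ to be $\|u\|_\cV := |u|_{H^1(\om)} = \|\grad u\|_{L^2(\om)}$; by the \Poincare\ inequality this is a norm, and, as recorded just after~\eqref{eq:VHA_Dirichlet}, it equals $\||A|^{1/2}u\|_\cH$, which is exactly the normalization demanded by the second half of Theorem~\ref{thm:abstract}. Next I would check the hypotheses of that theorem. Assumption~\ref{asm:rN} is given. Assumption~\ref{asm:Vshort} holds for the settings~\eqref{eq:VHA_Dirichlet} (with $C_\cV$ the \Poincare\ constant, and $R(z)$-invariance of $H^1_0(\om)$ classical), as already noted. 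Assumption~\ref{asm:Vh_in_dom(a)} holds since $\cV_h = L_h$ consists of continuous piecewise polynomials vanishing on $\partial\om$, hence $L_h \subseteq H^1_0(\om) = \dom(a)$. Finally Assumption~\ref{asm:Rlim} follows from Lemma~\ref{lem:rates}: boundedness of $R(z_k)$ on $\cV$ comes from the classical boundedness of $|A|^{1/2}R(z_k)$ together with $\cV \hookrightarrow \cH$, boundedness of $R_h(z_k)$ then follows via Lemma~\ref{lem:dpg-resolvent}, and~\eqref{eq:R-R_h_on_V} gives $\|R_h(z_k) - R(z_k)\|_\cV \lesssim \beta(z_k)^2 h^{\min(p,s,1)} \to 0$; since the finitely many $z_k$ lie in the bounded subset $D$ of the resolvent set (and, by Assumption~\ref{asm:rN}, off $\overline{\Sigma(A)}$), each $\beta(z_k)$ is finite and the bound is uniform in $k$.

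With the assumptions in place, Theorem~\ref{thm:abstract} directly delivers the convergence $\gap_\cV(\Eh\ell, E_h)\to 0$ of the FEAST iterates as $\ell\to\infty$ (invoking the standing normalization $\dim\Eh 0 = \dim P_h\Eh 0 = m$), as well as the bound~\eqref{eq:gap_E_Eh}, $\gap_\cV(E, E_h) \le C_N W \max_k \big\|[R(z_k) - R_h(z_k)]\big|_E\big\|_\cV$. To conclude~\eqref{eq:gapEEhDPG} I would then insert the sharper estimate~\eqref{eq:R-R_h_on_E}, which is valid precisely for $f \in E$, giving $\big\|[R(z_k) - R_h(z_k)]\big|_E\big\|_\cV \lesssim \beta(z_k)^2 h^{\min(p, s_E)}$; collecting $C_N$, $W$, and $\max_k \beta(z_k)^2$ into a single constant $C_0$ yields $\gap_\cV(E, E_h) \le C_0 h^{\min(p, s_E)}$.

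For the eigenvalue estimate~\eqref{eq:LLhDPG} I would apply~\eqref{eq:dist_L_Lh}, namely $\hdist(\Lambda, \Lambda_h) \le (\lmax)^2 \gap_\cV(E, E_h)^2 + C_1 C_E \gap_\cH(E, E_h)^2$. Since $\gap_\cV(E,E_h)\to 0$, for $h$ small the stated bound $(\lmax)^2 \le [1-\gap_\cV(E,E_h)]^{-2}C_E$ is at most, say, $2C_E$; and $\gap_\cH(E, E_h) \lesssim \gap_\cV(E, E_h)$ via the continuous embedding $\cV\hookrightarrow\cH$ (both subspaces being finite-dimensional), an inequality I would import from the abstract setting of~\cite{GopalGrubiOvall18} rather than from the raw pointwise embedding. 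Combining these with~\eqref{eq:gapEEhDPG} gives $\hdist(\Lambda, \Lambda_h) \lesssim h^{2\min(p, s_E)}$, with the constant again absorbing $\beta(z_k)^2$, $W$, $C_N$, $C_1$, $C_E$, $\Creg$, $p$, and the shape-regularity constant of $\oh$. The proof is thus essentially an assembly of already-established pieces; the only mildly delicate bookkeeping is (a) making the $\beta(z_k)$-dependence harmless by using that $\{z_k\}$ is a finite set strictly separated from $\Sigma(A)$, and (b) invoking the gap-comparison $\gap_\cH \lesssim \gap_\cV$ at the level of the abstract theory, which is the step I expect to require the most care.
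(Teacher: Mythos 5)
Your overall route is the paper's route: verify Assumptions~\ref{asm:rN}--\ref{asm:Vh_in_dom(a)} for the settings~\eqref{eq:VHA_Dirichlet} and~\eqref{eq:Vh_DPG}, get convergence of the iterates and the bound~\eqref{eq:gap_E_Eh} from Theorem~\ref{thm:abstract}, and insert the restricted-to-$E$ rate~\eqref{eq:R-R_h_on_E} to obtain~\eqref{eq:gapEEhDPG}. That part is correct and matches the paper. The use of~\eqref{eq:R-R_h_on_V} for Assumption~\ref{asm:Rlim} versus~\eqref{eq:R-R_h_on_E} for the rate is exactly the intended division of labor.

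The gap is in the step you yourself flag as delicate: the claim $\gap_\cH(E,E_h)\lesssim\gap_\cV(E,E_h)$ does \emph{not} follow from ``the continuous embedding $\cV\hookrightarrow\cH$ plus finite-dimensionality,'' and it is not simply imported from \cite{GopalGrubiOvall18} --- the paper proves it inside the proof of Theorem~\ref{thm:total}. The difficulty is twofold. First, the gap is a symmetric quantity: the embedding (together with $\|e\|_\cV^2=a(e,e)\le C_E\|e\|_\cH^2$ on $E$) controls only the one-sided quantity $\delta_h^\cH=\sup_{0\ne e\in E}\dist{\cH}(e,E_h)/\|e\|_\cH$, not the reverse term $\sup_{m\in U_{E_h}^\cH}\dist{\cH}(m,E)$; bounding that term the same way would need $\|m\|_\cV\lesssim\|m\|_\cH$ for $m\in E_h$ with an $h$-uniform constant, which is an inverse-type inequality you do not have. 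Second, finite-dimensionality only yields norm equivalence with constants depending on the space, hence on $h$. The paper closes this by showing that for small $h$ the full gap \emph{equals} $\delta_h^\cH$: once $\delta_h^\cH<1$, \cite[Theorem~I.6.34]{Kato95} produces a closed subspace $\tilde E_h\subseteq E_h$ with $\gap_\cH(E,\tilde E_h)=\delta_h^\cH<1$, whence $\dim\tilde E_h=\dim E=\dim E_h$ (the last equality because $\gap_\cV(E,E_h)<1$ for small $h$), so $\tilde E_h=E_h$ and $\gap_\cH(E,E_h)=\delta_h^\cH\lesssim\gap_\cV(E,E_h)$. You need this dimension-counting argument (or an equivalent substitute) to legitimately pass from~\eqref{eq:dist_L_Lh} to~\eqref{eq:LLhDPG}; as written, your justification for the $\gap_\cH$ term is insufficient.
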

\begin{proof}
  We apply Theorem~\ref{thm:abstract}.  As we have already noted,
  Assumption~\ref{asm:Vshort} holds for the model Dirichlet problem
  with the settings in~\eqref{eq:VHA_Dirichlet}.
  Estimate~\eqref{eq:R-R_h_on_V} of Lemma~\ref{lem:rates} verifies
  Assumption~\ref{asm:Rlim}.  Thus, since
  Assumptions~\ref{asm:rN}--\ref{asm:Rlim} hold, we may now
  apply~\eqref{eq:Eh_ell_to_Eh} of Theorem~\ref{thm:abstract} to
  conclude that $\gap_\cV(\Eh\ell, E_h) \to 0$. Moreover,
  the inequality \eqref{eq:gap_E_Eh} of Theorem~\ref{thm:abstract},
  when combined with the rate estimate~\eqref{eq:R-R_h_on_E} of
  Lemma~\ref{lem:rates} at each $z_k$, proves~\eqref{eq:gapEEhDPG}.

  Finally, to prove~\eqref{eq:LLhDPG}, noting that the $\cV_h$ set
  in~\eqref{eq:Vh_DPG} satisfies Assumption~\ref{asm:Vh_in_dom(a)}, we
  appeal to \eqref{eq:dist_L_Lh} of Theorem~\ref{thm:abstract} to
  \begin{equation}
    \label{eq:DPG_L_Lh_V_H}
    \hdist( \Lambda, \Lambda_h) \lesssim  \gap_\cV(E, E_h)^2
    + 
    \gap_\cH(E, E_h)^2.
  \end{equation}
  To control the last term, first note that
  $\| e \|_\cV^2 = a(e,e) \le C_E \| e \|^2_\cH $ for all $e \in E$.
  Moreover, by Assumption~\ref{asm:Vshort},
  $ \dist{\cH} (e, E_h) \le C_\cV\dist{\cV} (e, E_h).$
  Hence
  \begin{equation}
    \label{eq:deltaHh}
    \delta^\cH_h:= \sup_{0 \ne e \in E} \frac{\dist\cH( e, E_h) }{ \| e \|_{\cH}}
  \lesssim
  {\sup_{0 \ne e \in E}}\frac{ \dist\cV( e, E_h) }{  \| e \|_{\cV}}
  \le 
  \gap_\cV(E,E_h).
  \end{equation}
  Note that 
  \[
    \gap_\cH(E, E_h) =
    \max \bigg[
    \delta_h^\cH, 
    \sup_{m \in U_{E_h}^\cH} \dist{\cH} ( m, E)
    \bigg].
  \]
  Now, by the already proved estimate of~\eqref{eq:gapEEhDPG}, we know
  that $\gap_\cV( E, E_h) \to 0$. Hence, when $h$ is sufficiently
  small, $\gap_\cV( E, E_h)<1$, so $\dim(E_h) = \dim(E) = m$. Taking
  $h$ even smaller if necessary, $\delta^\cH_h <1$
  by~\eqref{eq:deltaHh}, so by~\cite[Theorem~I.6.34]{Kato95}, there is
  a closed subspace $\tilde{E}_h \subseteq E_h$ such that
  $\gap_\cH(E, \tilde{E}_h) = \delta_h^\cH < 1.$ But this means that
  $\dim(\tilde{E}_h)= \dim(E) =  \dim(E_h) $, so $\tilde{E}_h=E_h$.
  Summarizing, for sufficiently small
  $h$, we have
  \[
    \gap_\cH(E, E_h) = \delta_h^\cH
    \lesssim \gap_\cV(E, E_h).
  \]
  Returning to~\eqref{eq:DPG_L_Lh_V_H}, we conclude that
  \[
    \hdist(\Lambda, \Lambda_h) \lesssim \gap_\cV(E, E_h)^2,
  \]
  and the proof is finished using~\eqref{eq:gapEEhDPG}. 
\end{proof}

\subsection{A generalization to additive perturbations}
\label{ssec:gener}

In this short subsection, we will 
generalize the above theory to the case of the Dirichlet operator when
perturbed
additively by a real-valued $L^\infty(\om)$ reaction term.
Let $\nu: \om \to \RRR$ be a function in $L^\infty(\om)$ and let
\begin{equation}
  \label{eq:a-perturbed}
  a(u, v) = \int_\om \big[
  \grad u \cdot \grad \bar v - \nu u \bar v \big]
  \; dx  
\end{equation}
for any $ u, v \in \dom(a) = \cV = H_0^1(\om)$. The operator under
consideration in this subsection is the unbounded selfadjoint operator
$A$ on $\cH = L^2(\om)$ generated by the form $a$, per a standard
representation theorem~\cite[Theorem~10.7]{Schmu12}.

The starting point for our theory in the previous subsections was an
inf-sup condition (see Lemma~\ref{lem:dirichlet}) for the resolvent
form $ b(u, v) = z(u, v)_\cH - a(u,v).  $ We claim that
Lemma~\ref{lem:dirichlet} can be extended to the new $a(\cdot,\cdot)$.
To prove the claim, given any $v \in H_0^1(\om),$ we construct a
$w \in H_0^1(\om)$ slightly differently from the proof of
Lemma~\ref{lem:dirichlet}, namely
\[
  \revv{w = R(\bar z) \, ( \bar z v + \nu v),}
\]
\rev{which solves $b(s,w) = z(s,v)_{\mathcal{H}} + (\nu s, v)_{\mathcal{H}}$ for all $s \in H_0^1(\Omega)$.} Then we continue to obtain
the identity
\begin{equation}
  \label{eq:8}
b(v, v-w) =
-|v|_{H^1(\om)}^2.  
\end{equation}
Next, for any $\mu > \| \nu\|_{L^\infty(\om)}$, the form domain
$\dom(a) = H_0^1(\om)$ equals 
$\dom(A+ \mu)^{1/2}$ by~\cite[Proposition~10.5]{Schmu12}. The same
result also gives
\[
  a(u,v) = ( (A+\mu)^{1/2} u, (A+\mu)^{1/2} v)_\cH - \mu (u,v)_\cH
  \qquad \revv{\text{for all}} \: u, v \in H_0^1(\om).
\]
Hence
\begin{equation}
  \label{eq:9}
  |w|_{H^1(\om)}^2 = a(w,w) + (\nu w, w)_\cH \le
  a(w,w) + \mu \| w \|_\cH^2
  = \|(A + \mu)^{1/2}w\|_\cH^2.
\end{equation}

To proceed, recall that for any $z$ in the resolvent set,  functional calculus
\cite[Theorem~6.4.1]{BuhleSalam18} shows that the spectrum of the normal
operator $(A+\mu)^{1/2} R(z)$, consists of
$\{(\lambda+\mu)^{1/2} / (z - \lambda): \lambda \in \Sigma(A)\}$.
Thus $(A+\mu)^{1/2} R(z)$ is a bounded operator  of norm
$ c_z = \sup\{ |\lambda+\mu|^{1/2} / |z - \lambda|: \;\lambda \in
\Sigma(A)\} < \infty.$
Hence~\eqref{eq:9} implies  
$
|w|_{H^1(\om)}
  \le  \| (A+\mu)^{1/2} R(\bar z) \, (  \bar z v + \nu v)\|_\cH
  \le  c_z \|  \bar z v + \nu v\|_{\cH}.
  $
  Using the \Poincare\ inequality $c_P\| v
  \|_\cH \le  |v|_{H^1(\om)}$,  this implies
  $|w|_{H^1(\om)} \le (|z| + \mu) (c_z/c_P) |v|_{H^1(\om)}, $
so
\begin{equation}
  \label{eq:7}
  \revv{
  |v - w|_{H^1(\om)}   \le d(z) |v|_{H^1(\om)},
  }
\end{equation}
where
$
d(z) =  1 + (|z|+ \mu) c_z/c_P.  
$
Combining~\eqref{eq:8} and \eqref{eq:7}, we have 
\begin{align*}
\sup_{y \in H_0^1(\om)} 
  \frac{| b(v,y) |}{ \quad| y |_{H^1(\om)}}\geq
  \frac{| b(v, v-w) |}{ \quad| v-w  |_{H^1(\om)}}
  \ge
  \frac{ |v|_{H^1(\om)}^2}{ d(z)| v|_{H^1(\om)}},
\end{align*}
so the inf-sup condition follows, extending
Lemma~\ref{lem:dirichlet} as claimed.

\begin{lemma}[Generalization of Lemma~\ref{lem:dirichlet}]
  Suppose $a$ as in~\eqref{eq:a-perturbed},
  $ b(u, v) = z(u, v)_\cH - a(u,v)$, $z$ is in the resolvent set
  of $A$, and  $d(z)$ is as defined above. Then for all
  $v \in H_0^1(\om)$,
  \begin{align*}
    \sup_{y \in H_0^1(\om)} 
    \frac{| b(v,y) |}{ \quad| y |_{H^1(\om)}}\geq
    d(z)^{-1} \,| v |_{H^1(\om)}.
  \end{align*}
\end{lemma}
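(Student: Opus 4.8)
The statement is essentially established by the computation in the paragraph preceding it, so my plan is to assemble those steps into a self-contained argument. Fix a non-zero $v \in H_0^1(\om)$. As in the proof of Lemma~\ref{lem:dirichlet}, the idea is to exhibit one explicit test function $y = v - w$ that makes the numerator $|b(v,y)|$ as large as $|v|_{H^1(\om)}^2$ while keeping the denominator $|y|_{H^1(\om)}$ comparable to $|v|_{H^1(\om)}$. To accommodate the extra reaction term in~\eqref{eq:a-perturbed} I would take $w = R(\bar z)(\bar z v + \nu v)$, which is well defined because $z$, hence $\bar z$, lies in the resolvent set of $A$; a direct computation shows $b(s,w) = z(s,v)_\cH + (\nu s, v)_\cH$ for all $s \in H_0^1(\om)$, and setting $s = v$ yields the identity~\eqref{eq:8}, namely $b(v, v - w) = -|v|_{H^1(\om)}^2$.

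The substance of the proof is the bound $|v - w|_{H^1(\om)} \le d(z)|v|_{H^1(\om)}$, and I expect this to be the main obstacle: the form $a$ in~\eqref{eq:a-perturbed} is no longer nonnegative, so one cannot simply pass to $A^{1/2}$ as in Lemma~\ref{lem:dirichlet}, and the remedy is a spectral shift. Choosing any $\mu > \|\nu\|_{L^\infty(\om)}$ makes $A + \mu$ positive, and by~\cite[Proposition~10.5]{Schmu12} one has $\dom((A+\mu)^{1/2}) = \dom(a) = H_0^1(\om)$ together with $a(u,v) = ((A+\mu)^{1/2}u, (A+\mu)^{1/2}v)_\cH - \mu(u,v)_\cH$. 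Applied to $w$, this gives $|w|_{H^1(\om)}^2 = a(w,w) + (\nu w, w)_\cH \le \|(A+\mu)^{1/2}w\|_\cH^2$, which is~\eqref{eq:9}. Since $(A+\mu)^{1/2}R(\bar z)$ is a normal operator, functional calculus~\cite[Theorem~6.4.1]{BuhleSalam18} identifies its norm as $c_z = \sup\{|\lambda+\mu|^{1/2}/|z-\lambda| : \lambda \in \Sigma(A)\}$, which is finite because $z$ sits at a positive distance from the real, bounded-below spectrum while $|\lambda + \mu|^{1/2}$ grows more slowly than $|z - \lambda|$.

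Combining the last two estimates, $|w|_{H^1(\om)} \le c_z \|\bar z v + \nu v\|_\cH \le (|z| + \mu)\,c_z\,\|v\|_\cH$, and the \Poincare\ inequality $c_P \|v\|_\cH \le |v|_{H^1(\om)}$ converts this to $|w|_{H^1(\om)} \le (|z|+\mu)(c_z/c_P)|v|_{H^1(\om)}$; by the triangle inequality $|v - w|_{H^1(\om)} \le d(z)|v|_{H^1(\om)}$ with $d(z) = 1 + (|z|+\mu)c_z/c_P$, which is~\eqref{eq:7}. Finally I would restrict the supremum to the single choice $y = v - w$ and invoke~\eqref{eq:8} and~\eqref{eq:7}:
\[
\sup_{y \in H_0^1(\om)} \frac{|b(v,y)|}{|y|_{H^1(\om)}}
\;\ge\; \frac{|b(v, v - w)|}{|v - w|_{H^1(\om)}}
\;\ge\; \frac{|v|_{H^1(\om)}^2}{d(z)\,|v|_{H^1(\om)}}
\;=\; d(z)^{-1}|v|_{H^1(\om)},
\]
which is the asserted inf-sup bound. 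Apart from the passage through the shifted operator $A + \mu$, every step is a transcription of the argument used for Lemma~\ref{lem:dirichlet}.
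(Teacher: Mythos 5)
Your proposal is correct and follows essentially the same route as the paper: the same modified test function $w = R(\bar z)(\bar z v + \nu v)$, the same identity $b(v,v-w) = -|v|_{H^1(\om)}^2$, and the same bound $|v-w|_{H^1(\om)} \le d(z)|v|_{H^1(\om)}$ via the shifted operator $(A+\mu)^{1/2}$, the norm $c_z$ of $(A+\mu)^{1/2}R(\bar z)$, and the \Poincare\ inequality. The paper indeed presents this argument in the paragraph preceding the lemma statement, exactly as you assembled it.
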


Using this lemma in place of Lemma~\ref{lem:dirichlet}, the remainder
of the analysis proceeds with minimal changes, provided we also assume
that $\nu$ is piecewise constant. More precisely, assume that $\nu$ is
constant on each element of the mesh $\oh$. Then the same Fortin
operator used in the proof of Lemma~\ref{lem:dpg-resolvent}
applies. Hence the final result of Theorem~\ref{thm:total} holds with
a possibly different constant $C_0$ (still independent of $h$)
whenever Assumption~\ref{asm:reg} holds.

\section{Numerical convergence studies}\label{NS}

In this section, we report on our numerical convergence studies using
the FEAST algorithm with the DPG discretization for the model
Dirichlet eigenproblem.  This spectral approximation technique is
exactly the one described in Section~\ref{ssec:dpg}.  An
implementation of this technique was built using~\cite{Gopal17}, which
contains a hierarchy of Python classes representing approximations of
spectral projectors. The DPG discretization is implemented using a
python interface into an existing well-known C++ finite element
library called NGSolve~\cite{Schob17}.  We omit the implementation
details of the FEAST algorithm as they can be found either in our
public code~\cite{Gopal17} or previous works like
\cite[Algorithm~1.1]{Saad16} and \cite{GuttePolizTang15}. We note that
our implementation performs an implicit orthogonalization through a
small Rayleigh-Ritz eigenproblem at each iteration.  For all
experiments reported below, we set $r_N$ to  the rational function
corresponding to the Butterworth filter obtained by setting
$w_N=0$ and
\begin{align}\label{CircleQuad}
z_k=\gamma e^{\ii
  (\theta_k+\phi)}+y,\quad\quad w_k=\gamma
  e^{\ii (\theta_k+\phi)}/N, \qquad
  k=0, \ldots, N-1,
\end{align}
where $\theta_k=2\pi k/N$ and $ \phi=\pm\pi/N.$ This corresponds to an
approximation of the contour integral in~\eqref{eq:DunfodTaylor}, with
a circular contour $\Gamma$ of radius $\gamma$ centered at $y$, using
the trapezoidal rule with $N$ equally spaced quadrature points. In all
experiments reported below, we set $N=8$.

\subsection{Discretization \revv{errors on the unit square}}


\begin{figure}[b]
  \begin{subfigure}[t]{0.49\textwidth} 
    \begin{center}  
      \begin{tikzpicture}
        \begin{loglogaxis}[
          footnotesize,
          width=0.95\textwidth,
          height=\textwidth,
          xlabel=$h$,
          ylabel={$d_h$},
          legend pos = south east,
          max space between ticks=30pt,
          ]


          \addplot coordinates {
            (0.2500000000, 6.747273e+00)
            (0.1250000000, 1.975461e+00)
            (0.0625000000, 7.534260e-01)
            (0.0312500000, 3.509711e-01)
            (0.0156250000, 1.723048e-01)
            (0.0078125000, 8.576663e-02)
          };

          \addplot coordinates {
            (0.2500000000, 9.049084e-01)
            (0.1250000000, 3.790549e-01)
            (0.0625000000, 8.318753e-02)
            (0.0312500000, 1.879642e-02)
            (0.0156250000, 4.437640e-03)
            (0.0078125000, 1.077704e-03)
          };

          \addplot coordinates {
            (0.2500000000, 3.705158e-01)
            (0.1250000000, 7.932601e-02)
            (0.0625000000, 9.720224e-03)
            (0.0312500000, 1.209351e-03)
            (0.0156250000, 1.508522e-04)
            (0.0078125000, 1.886285e-05)
          };
          
          \logLogSlopeTriangle{0.59}{0.15}{0.33}{3}{black};
          \logLogSlopeTriangle{0.59}{0.15}{0.515}{2}{black};
          \logLogSlopeTriangle{0.59}{0.15}{0.705}{1}{black};

          \legend{$p=1$, $p=2$, $p=3$}
        \end{loglogaxis}
      \end{tikzpicture}
    \caption{Convergence rates for eigenfunctions}
    \label{fig:efsqr}
    \end{center}  
  \end{subfigure}
  \begin{subfigure}[t]{0.49\textwidth} 
    \begin{center}  
      \begin{tikzpicture}
        \begin{loglogaxis}[
          footnotesize,
          width=0.95\textwidth,
          height=\textwidth,
          xlabel=$h$,
          ylabel={$\hdist( {\Lambda}, {\Lambda_h})$},
          legend pos = south east,
          max space between ticks=30pt,
          ]


          \addplot coordinates {
            (0.2500000000, 1.455193e+01)
            (0.1250000000, 4.124450e+00)
            (0.0625000000, 9.859321e-01)
            (0.0312500000, 2.436991e-01)
            (0.0156250000, 6.066035e-02)
            (0.0078125000, 1.513589e-02)
          };

          \addplot coordinates {
            (0.2500000000, 5.419321e-01)
            (0.1250000000, 5.954395e-02)
            (0.0625000000, 4.126409e-03)
            (0.0312500000, 2.647773e-04)
            (0.0156250000, 1.668255e-05)
            (0.0078125000, 1.045518e-06)
          };

          \addplot coordinates {
            (0.2500000000, 1.472728e-02)
            (0.1250000000, 5.240445e-04)
            (0.0625000000, 7.863915e-06)
            (0.0312500000, 1.218536e-07)
            (0.0156250000, 1.896943e-09)
            (0.0078125000, 3.102940e-11)
          };

          \logLogSlopeTriangle{0.59}{0.15}{0.31}{6}{black};
          \logLogSlopeTriangle{0.59}{0.15}{0.55}{4}{black};
          \logLogSlopeTriangle{0.59}{0.15}{0.76}{2}{black};

          \legend{$p=1$, $p=2$, $p=3$}
        \end{loglogaxis}
      \end{tikzpicture}
      \caption{Convergence rates for eigenvalues}
      \label{fig:ewsqr}
    \end{center}  
    \end{subfigure}
    \caption{Results for the unit square}
    \label{fig:ewf}
\end{figure}
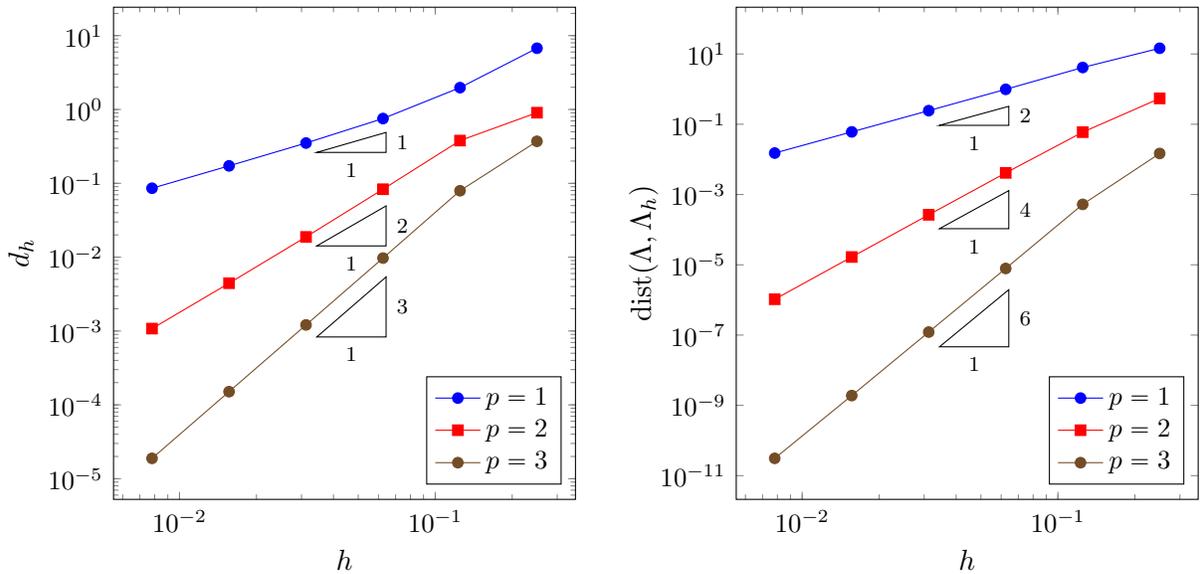

Let $\Omega = (0,1) \times (0,1)$ and consider the Dirichlet
eigenvalues enclosed within the circular contour $\Gamma$ of radius
$\gamma = 45$ and center $y = 20$.  The exact set of eigenvalues for
this example is known to be $\Lambda = \{ 2\pi^2, 5 \pi^2\}$. The
first eigenvalue $ 2\pi^2= \lambda_1$ is of multiplicity 1, while the
second $5\pi^2 = \lambda_2 = \lambda_3$ is of multiplicity~2.
The corresponding eigenfunctions are well-known analytic functions.

To perform the numerical studies, we begin by solving our problem on a
coarse mesh of mesh size $h = 2^{-2}$ and refine until we reach a mesh
size of $h = 2^{-7}$. Each mesh refinement halves the mesh size by
either bisecting or quadrisecting the triangular elements of a mesh.
For each mesh size value of $h$, we perform this experiment for
polynomial degrees $p = 1, 2, $ and $3$.  After each experiment we
collect the approximate eigenvalues ordered so that 
$\lambda_{1, h} \le \lambda_{2, h} \le \lambda_{3, h}$
and their corresponding eigenfunctions $e_{i, h}$.

One way to measure the convergence of eigenfunctions is through
\begin{align*}
  \delta_i^{(1)} =& \min_{0 \ne e \in E} |e_{i,h} - e|_{H^1(\Omega)} =
              \dist{H_0^1(\om)}( e_{i,h}, E),
  \\
  \delta_i^{(2)} =& \min_{0 \ne e_h \in E_h} |e_i - e_h|_{H^1(\Omega)} =
              \dist{H_0^1(\om)}( e_i, E_h).
\end{align*}
Note that both $\delta_i^{(1)}$ and $\delta_i^{(2)}$ are bounded by
$\gap_{H_0^1(\om)} (E_h, E)$.  Since computing $\delta_i^{(1)}$ and
$\delta_i^{(2)}$ require exact integration of quantities involving the
exact eigenspace, we instead compute
\[
  \delta_{i, h}^{(1)} = \dist{H_0^1(\om)} (e_{i, h}, I_h E)
  \quad
  \revv{\text{and}}
  \quad
  \delta_{i, h}^{(2)} = \dist{H_0^1(\om)} (I_h e_i, E_h),
\]
where $I_h$ is a standard interpolant into the finite element space
$\cV_h$.  For brevity, instead of plotting the behavior of each
$\delta_{i, h}^{(j)}$ for all $i, j$, we plot the behavior of their
sum
\[
  d_h = \sum_{i=1}^3 \sum_{j=1}^2 \delta_{i, h}^{(j)}
\]
for decreasing mesh sizes $h$ and increasing polynomial degrees $p$ in
Figure~\ref{fig:ewf}. 
In the same figure panel, we also display the
observed errors in the computed eigenvalues in $\Lambda_h$ by plotting
the Hausdorff distance $\hdist(\Lambda, \Lambda_h)$ for various values
of $h$ and~$p$.

Since $\delta_i^{(j)}$ should go to zero at the same rate as
$\gap_{H_0^1(\om)} (E_h, E)$ and since the interpolation errors are of
the same order as the gap, we expect $d_h$ to go to zero as $h\to 0$
at the same rate as $\gap_{H_0^1(\om)} (E_h, E).$ From
Figure~\ref{fig:efsqr}, we observe that $d_h$ appears to converge to 0
at the rate $O(h^{p})$ for $p=1, 2,$ and 3.  Since the eigenfunctions
on the unit square are analytic, Assumption~\ref{asm:reg} holds for
this example with {\em any} $s_E > 0$. Therefore, our observation on
the rate of convergence of $d_h$ is in agreement with the gap
estimate~\eqref{eq:gapEEhDPG} of Theorem~\ref{thm:total}.
Figure~\ref{fig:ewsqr} shows that as $h$ decreases,
$\hdist(\Lambda, \Lambda_h)$ decreases to 0 at the rate $O(h^{2p})$
for $p=1, 2,$ and 3.  This is also in good agreement with the
eigenvalue error estimate~\eqref{eq:LLhDPG} of
Theorem~\ref{thm:total}.

The results presented above using the DPG discretization are
comparable to those found in~\cite{GopalGrubiOvall18} using the FEAST
algorithm with the standard finite element discretization of
comparable orders.

\begin{remark} \label{rem:other_experim}
In other unreported experiments, we found that setting
$Y_h$ to
\[
  \tilde{Y}_h=\{ y \in H^1(\oh): y|_K \in P_{p+1}(K)\}
\]
also gave the same convergence rates. This indicates that the space
dictated by the theory, namely
$Y_h = \{ y \in H^1(\oh): y|_K \in P_{p+3}(K)\}$, might be overly
conservative. We already noted one approach to improve the estimates
in Remark~\ref{rem:Pih}.  Another approach might be through a
perturbation argument, as the theory in~\cite{DemkoGopal13a} proves
the error estimate of Lemma~\ref{lem:dpg-resolvent} at $z=0$ even when
$Y_h$ is replaced by $\tilde{Y}_h$.
\end{remark}

\subsection{Convergence rates on an L-shaped domain}

\begin{table}
	\centering
	\begin{tabular}{|c|cc|cc|cc|}
		\hline
		         & $\lambda_1$ &      & $\lambda_2$ &      & $\lambda_3$ & 			\\	
		     $h$ &         ERR &  NOC &         ERR &  NOC &         ERR & NOC		\\	\hline
		$2^{-2}$ &    6.29e-02 &  --- &    3.29e-02 &  --- &    5.95e-02 &  ---	\\
		$2^{-3}$ &    2.41e-02 & 1.39 &    2.65e-03 & 3.63 &    4.05e-03 & 3.88	\\
		$2^{-4}$ &    9.48e-03 & 1.34 &    2.55e-04 & 3.38 &    2.59e-04 & 3.97	\\
		$2^{-5}$ &    3.75e-03 & 1.34 &    2.99e-05 & 3.09 &    1.63e-05 & 3.99	\\
		$2^{-6}$ &    1.49e-03 & 1.34 &    4.03e-06 & 2.89 &    1.02e-06 & 4.00	\\	\hline
	\end{tabular}
	\caption{Eigenvalue errors (ERR) and numerical order of convergence (NOC) for the
          smallest three eigenvalues on the L-shaped domain.}
        \label{tab:L}
\end{table}

%

In this example, we consider the Dirichlet eigenvalues of the L-shaped
domain $\Omega = (0,2) \times (0,2) \setminus [1,2] \times [1,2]$
enclosed within a circular contour of radius $\gamma = 8$ centered at
$y = 15$.  The first three Dirichlet eigenvalues are enclosed in this
contour and we are interested in determining the eigenvalue error and
numerical order of convergence for these. We use the results reported
in \cite{TrefeBetcke06} as our reference eigenvalues, namely
$\lambda_1 \approx 9.6397238$, \revv{$\lambda_2 \approx 15.197252$}, and
$\lambda_3 = 2\pi^2$.

With the above values of $\lambda_i$ (displayed up to the digits the
authors of \cite{TrefeBetcke06} claimed confidence in), we define
$\text{ERR}(h) = |\lambda_{i,h} - \lambda_i|$, where
$\lambda_{1,h}\le \lambda_{2,h}\le \lambda_{3,h}$ are the approximate
eigenvalues obtained by FEAST. Then we define the numerical order of
convergence (NOC) as
$\text{NOC}(h) = \log(\text{ERR}(2h)/\text{ERR}(h)) / \log(2)$.

We perform our convergence study, as in the unit square case, using a
sequence of uniformly refined meshes, starting from a mesh size of
$h = 2^{-2}$ and ending with a mesh size of $h = 2^{-6}$. In this
example we confine the scope of our convergence study to polynomial
degree $p = 2$.  Further mesh refinements or higher degrees are not
studied because the exact eigenvalues are only available to limited
precision and errors below this precision cannot be used to surmise
convergence rates accurately. The observations are compiled in
Table~\ref{tab:L}.

From the first column of Table~\ref{tab:L}, we find that the first
eigenvalue is observed to converge at a rate of approximately $4/3$.
For polygonal domains, its well known that Assumption~\ref{asm:reg}
holds with any positive $s$ less than the $\pi/\alpha$ where $\alpha$
is the largest of the interior angles at the vertices of the
polygon. Clearly $\alpha = 3 \pi/2$ for our L-shaped $\om$.  The
eigenfunction corresponding to the first eigenvalue is known to be
limited by this regularity, so $s_E$ may be chosen to be any positive
number less than $2/3$.  Therefore, the observed convergence rate of
$4/3$ for the first eigenvalue is in agreement with the rate of
$2\min(p,s_E)$ established in Theorem~\ref{thm:total}.  Although
Theorem~\ref{thm:total} does not yield improved convergence rates for
the other eigenvalues, which have eigenfunctions of higher regularity,
we observe from the remaining columns of Table~\ref{tab:L} that in
practice we do observe higher order convergence rates. E.g., the
eigenfunction corresponding to $\lambda_3 = 2\pi^2$ is analytic and we
observed that the corresponding eigenvalue converges at a rate
$O(h^{2p})$ that is not limited by $s_E$.

\section{Application to optical fibers}\label{fib}

Double-clad step-index optical fibers have resulted in numerous
technological innovations. Although originally intended to carry
energy in a single mode, for increased power operation large mode area
(LMA) fibers are now being sold extensively. LMA fibers usually have
multiple guided modes. In this section, we show how to use the method
we developed in the previous sections to compute such modes. We begin
by showing that the problem of computing the fiber modes can be viewed as
a problem of computing an eigenvalue cluster of an operator of the
form discussed in Subsection~\ref{ssec:gener}.


These optical fibers have a cylindrical core of radius $\rcore$ and a
cylindrical cladding region enveloping the core, extending to radius
$\rclad$. We set up our axes so that the longitudinal direction of the
fiber is the $z$-axis. The transverse coordinates will be denoted
$x, y$ while using Cartesian coordinates and the eigenvalue problem
will be posed in these coordinates. Thus the space dimension
(previously denoted by $n$) will be fixed to $2$ in this section, so
denoting the refractive index of the fiber by $n$ in this section
causes no confusion.  We have in mind fibers whose refractive index
$n(x,y)$ is a piecewise constant function, equalling $\ncore$ in the
core, and $\nclad$ in the cladding region $(\nclad < \ncore)$. The
guided modes, also called the transverse core modes, decay
exponentially in the cladding region.

These {\em modes} of the fiber, which we denote by $\varphi_l(x,y),$ are
non-trivial functions that, together with their accompanying
(positive) {\em propagation constants} $\beta_l,$ solve
\begin{subequations}
  \label{eq:optic-ewp}
\begin{equation}
  \label{eq:optic-ewp-pde}
  (\Delta + k^2 n^2) \varphi_l = \beta_l^2 \varphi_l,
  \qquad r < \rcore,
\end{equation}
where $k$ is a given wave number of the signal light,
$\Delta = \partial_{xx} + \partial_{yy}$ denotes the Laplacian in the
transverse coordinates $x, y$. Since the guided modes decay
exponentially in the cladding, and since the cladding radius is
typically many times larger than the core, we
supplement~\eqref{eq:optic-ewp-pde} with zero Dirichlet boundary
conditions at the end of the cladding:
\begin{equation}
  \label{eq:optic-ewp-bc}
  \varphi_l = 0, \qquad r = \rcore.
\end{equation}
\end{subequations}
Since the spectrum of the Dirichlet operator $\Delta$ lies in the
negative real axis and has an accumulation point at $-\infty$, we
expect to find only finitely many $ \lambda_l \equiv \beta_l^2>0$
satisfying~\eqref{eq:optic-ewp}. This finite collection of eigenvalues
$\lambda_l$ form our eigenvalue cluster $\Lambda$ in this application,
and the corresponding eigenspace $E$ is the span of the modes
$\varphi_l$.

From the standard theory of step-index fibers \cite{Reide16}, it
follows that the propagation constants $\beta_l$ of guided modes
satisfy
\[
  \rev{\nclad^2} k^2 < \beta_l^2  < \rev{\ncore^2} k^2.
\]
Thus, having a pre-defined search interval, the computation of the
eigenpairs $(\lambda_l, \varphi_l)$ offers an example very well-suited
for applying the FEAST algorithm.  Moreover, since separation of
variables can be employed to calculate the exact solution in terms of
Bessel functions, we are able to perform convergence studies as well.
Below, we apply the algorithm to a realistic fiber using the
previously described DPG discretization of the resolvent of the
Helmholtz operator $\Delta + k^2 n^2$ with Dirichlet boundary
conditions to a realistic fiber.

The fiber we consider is the commercially available ytterbium-doped
Nufern\texttrademark \;(nufern.com) fiber, whose typical  parameters are 
\begin{equation}
  \label{eq:Nufern}
  \ncore=1.45097, \;\;
  \nclad = 1.44973,\;\;
  \rcore = 0.0125~\mathrm{m},\;\;
  \rclad=16\rcore.
\end{equation}
The typical operating wavelength for signals input to this fiber is
$1064$ nanometers, so we set the wavenumber to
$k = (2 \pi/1.064)\times 10^{6}$. Due to the small fiber radius, we
compute after scaling the eigenproblem~\eqref{eq:optic-ewp} to the
unit disc $\hat \om = \{ r<1\}$, i.e., we compute modes
$\hat\varphi_l: \hat \om \to \CCC$ satisfying
$ (\Delta + k^2 n^2 \rclad^2) \hat\varphi_l = \rclad^2\beta_l^2
\hat\varphi_l$ in $\hat\om$ and $\hat\varphi_l = 0$ on
$\partial\hat\om$.  As in the previous section, all results here are
generated using our code~\cite{Gopal17} built atop
NGSolve~\cite{Schob17}. Note that all experiments in this section are
performed using the reduced $\tilde{Y}_h$ mentioned
in Remark~\ref{rem:other_experim}.

Results from the computation are given in Figures~\ref{fig:ybmesh}
and~\ref{fig:ybeigenfuncs}.  Note that the elements whose boundary
intersects the core or cladding boundary are isoparametrically curved
to minimize boundary representation errors -- see
Figures~\ref{fig:ybmesh-far} and~\ref{fig:ybmesh-near}. The modes are
localized near the core region, so the mesh is designed to be finer
there. A six dimensional eigenspace was found. The computed basis for
the 6-dimensional space of modes, obtained using polynomial degree
$p=6$, are shown (zoomed in near the core region) in the plots of
Figure~\ref{fig:ybeigenfuncs}. The mode $e_6$  shown in
Figure~\ref{fig:mode6} is considered the ``fundamental mode'' for this
fiber, also called the LP01 mode in the optics literature~\cite{Reide16}.

\begin{figure}
	\centering
	\begin{subfigure}[t]{0.45\textwidth}
		\centering
		\includegraphics[width=0.9\textwidth]{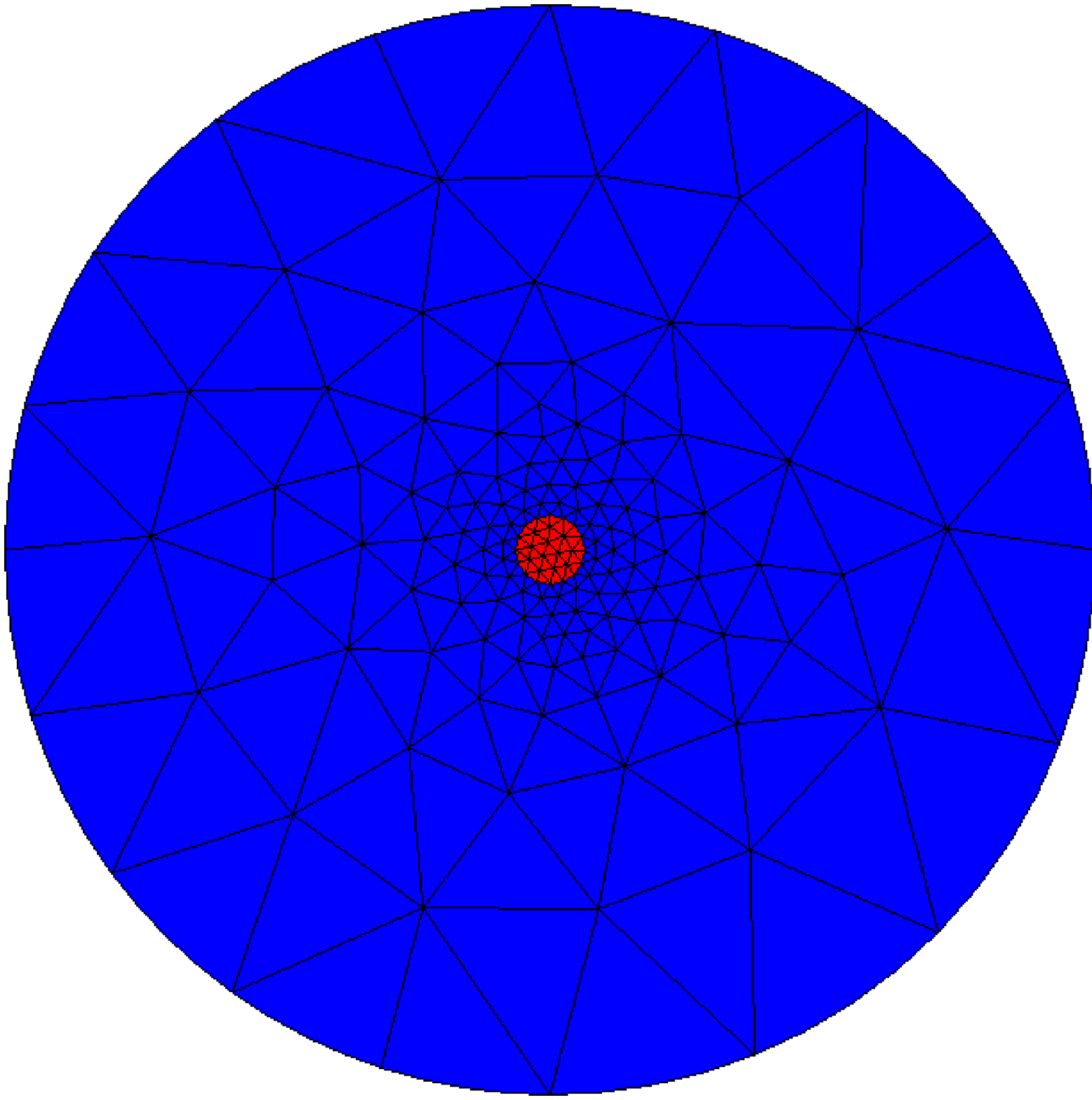}
		\subcaption{The mesh with curved
                  elements adjacent to the core and cladding boundaries.}
                \label{fig:ybmesh-far}
	\end{subfigure}%
	\quad 
	\begin{subfigure}[t]{0.45\textwidth}
		\centering
		\includegraphics[width=0.9\textwidth]{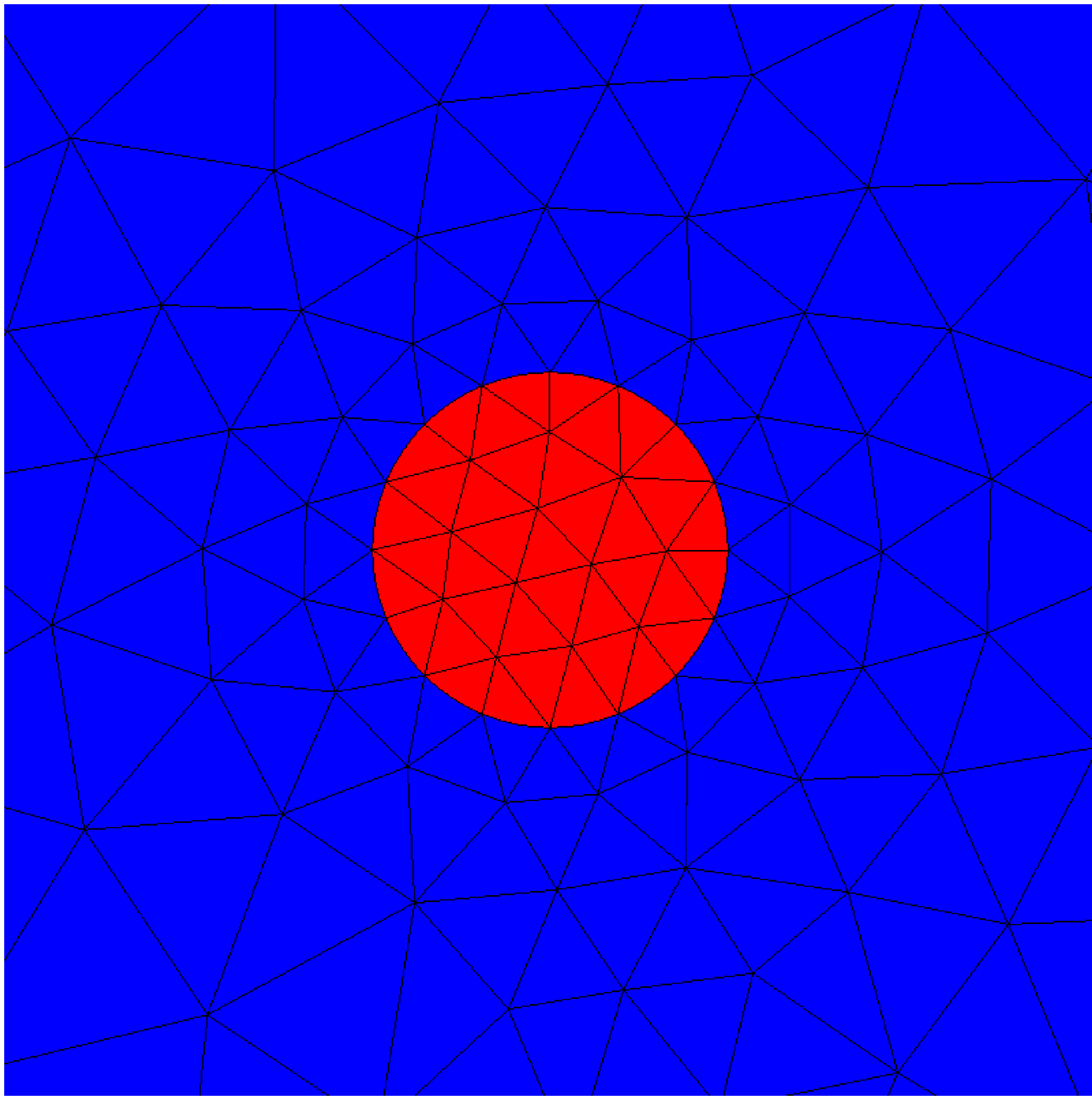}
		\subcaption{Zoomed-in view of the mesh in
                  Figure~\ref{fig:ybmesh-far} near
                  the core.}
                \label{fig:ybmesh-near}
	\end{subfigure}%
	\caption{The mesh used for computing modes of the ytterbium-doped fiber.}
	\label{fig:ybmesh}
\end{figure}

\begin{figure}
	\centering
	\begin{subfigure}[t]{0.3\textwidth}
		\centering
		\begin{tikzpicture}
		    \node[anchor=south west,inner sep=0] (image) at (0,0) {\includegraphics[width=0.875\textwidth]{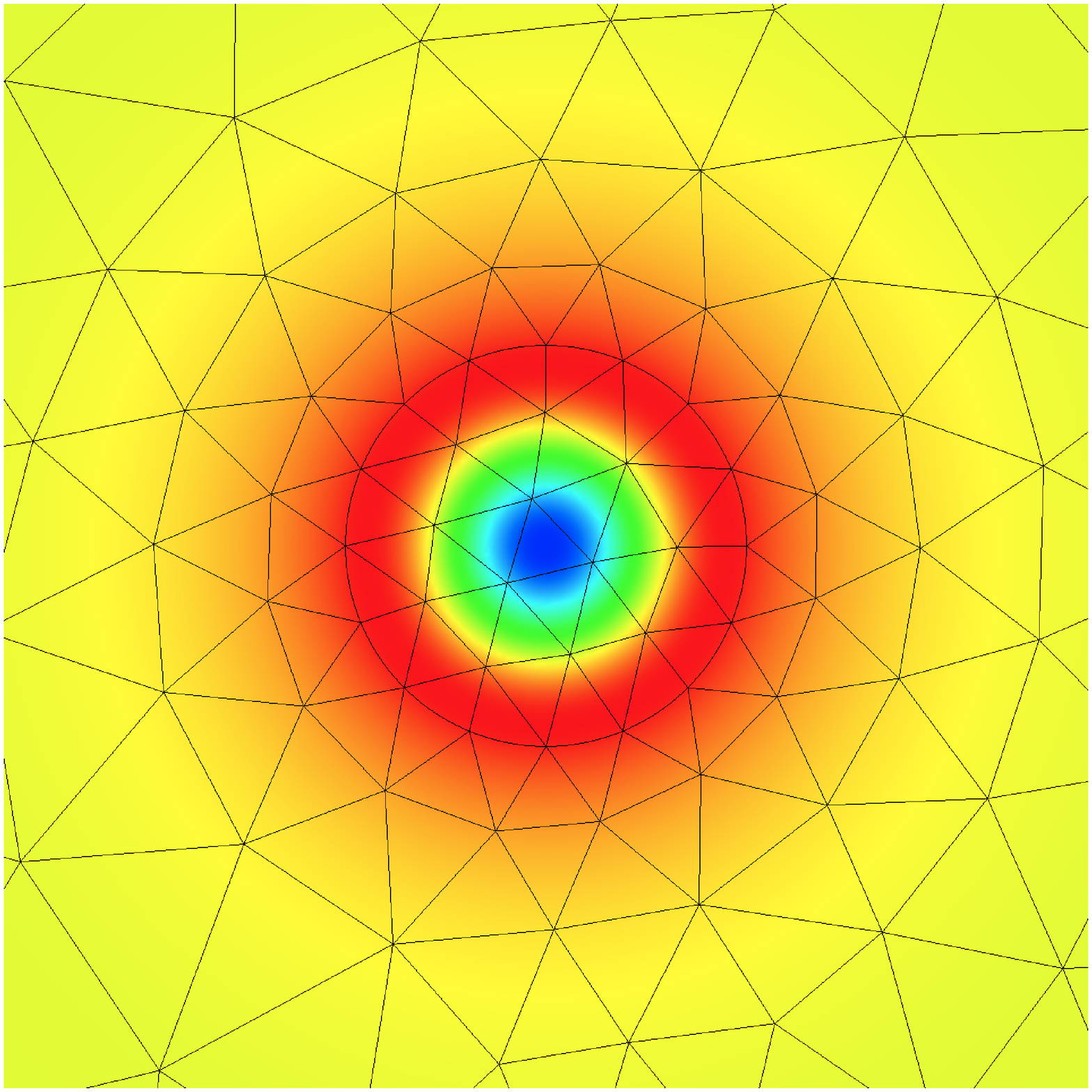}};
		    \begin{scope}[x={(image.south east)},y={(image.north west)}]
		        \draw[black, opacity=.5, thick, dashed] (0.5,0.5) circle (0.19);
		    \end{scope}
		\end{tikzpicture}
		\subcaption{$ \varphi^h_1$}
	\end{subfigure}%
	~ 
	\begin{subfigure}[t]{0.3\textwidth}
		\centering
		\begin{tikzpicture}
		    \node[anchor=south west,inner sep=0] (image) at (0,0) {\includegraphics[width=0.875\textwidth]{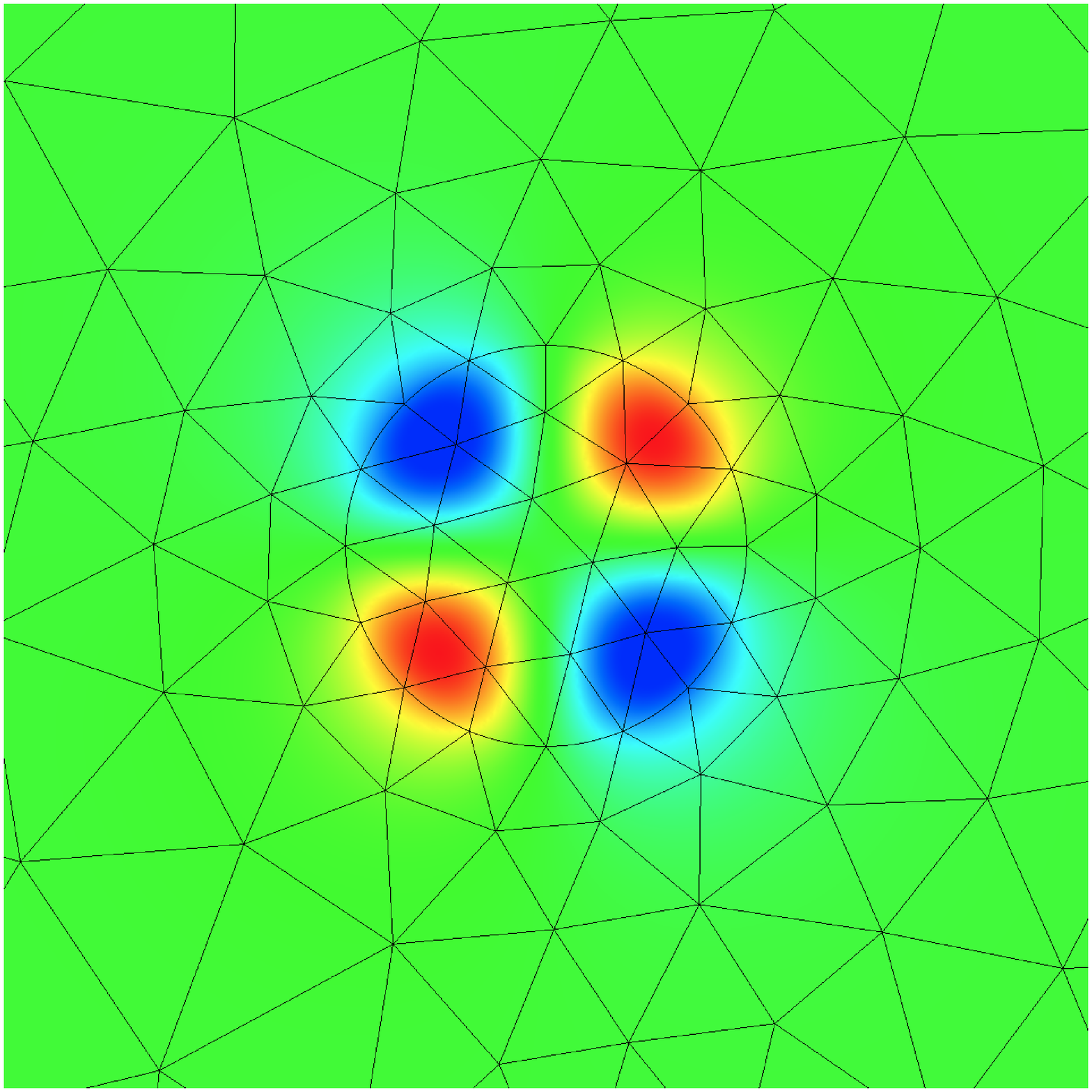}};
		    \begin{scope}[x={(image.south east)},y={(image.north west)}]
		        \draw[black, opacity=.5, thick, dashed] (0.5,0.5) circle (0.19);
		    \end{scope}
		\end{tikzpicture}
		\subcaption{$ \varphi^h_2$}
	\end{subfigure}%
	~ 
	\begin{subfigure}[t]{0.3\textwidth}
		\centering
		\begin{tikzpicture}
		    \node[anchor=south west,inner sep=0] (image) at (0,0) {\includegraphics[width=0.875\textwidth]{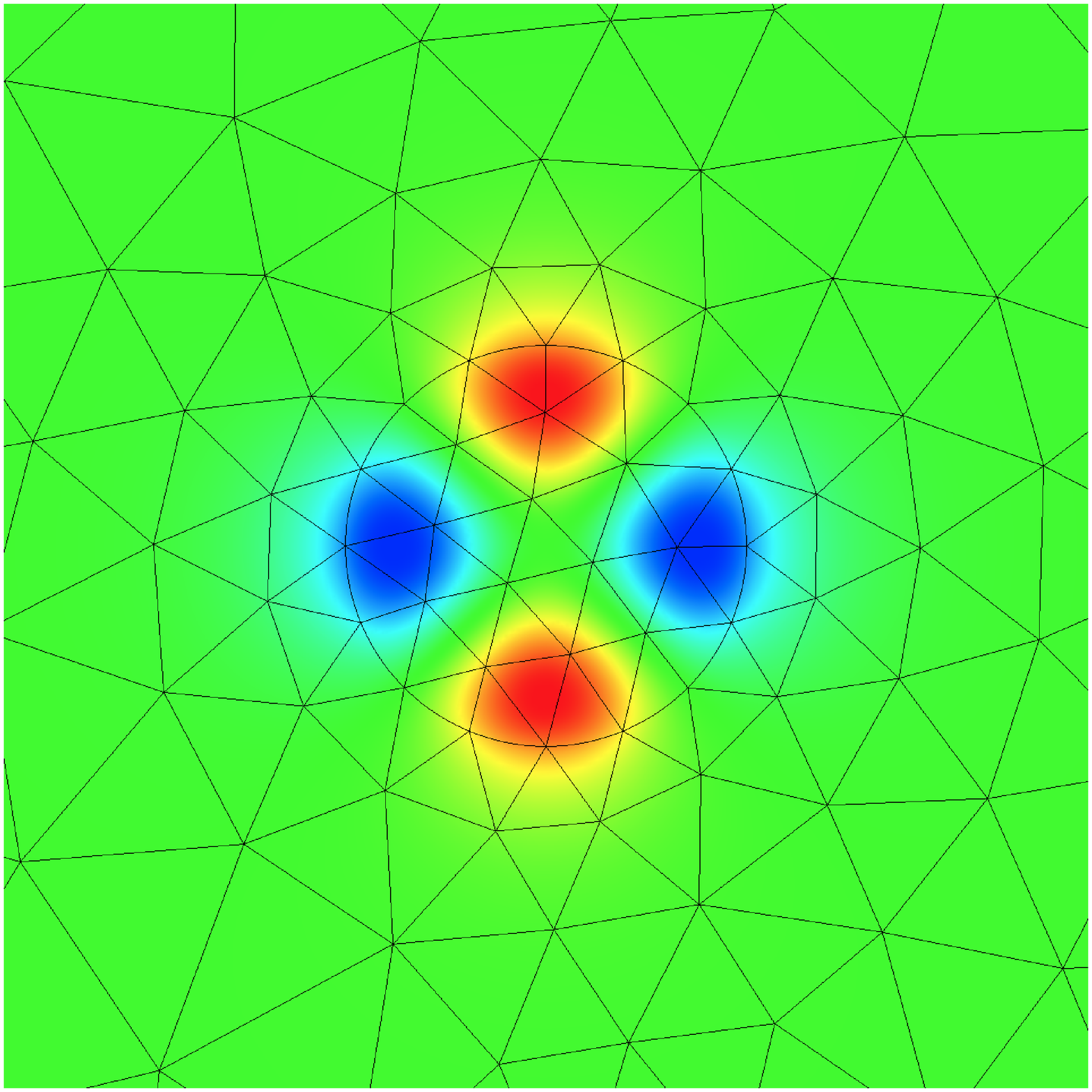}};
		    \begin{scope}[x={(image.south east)},y={(image.north west)}]
		        \draw[black, opacity=.5, thick, dashed] (0.5,0.5) circle (0.19);
		    \end{scope}
		\end{tikzpicture}
		\subcaption{$\varphi^h_3$}
	\end{subfigure}
	\newline
	\newline
	\noindent
	\begin{subfigure}[t]{0.3\textwidth}
		\centering
		\begin{tikzpicture}
		    \node[anchor=south west,inner sep=0] (image) at (0,0) {\includegraphics[width=0.875\textwidth]{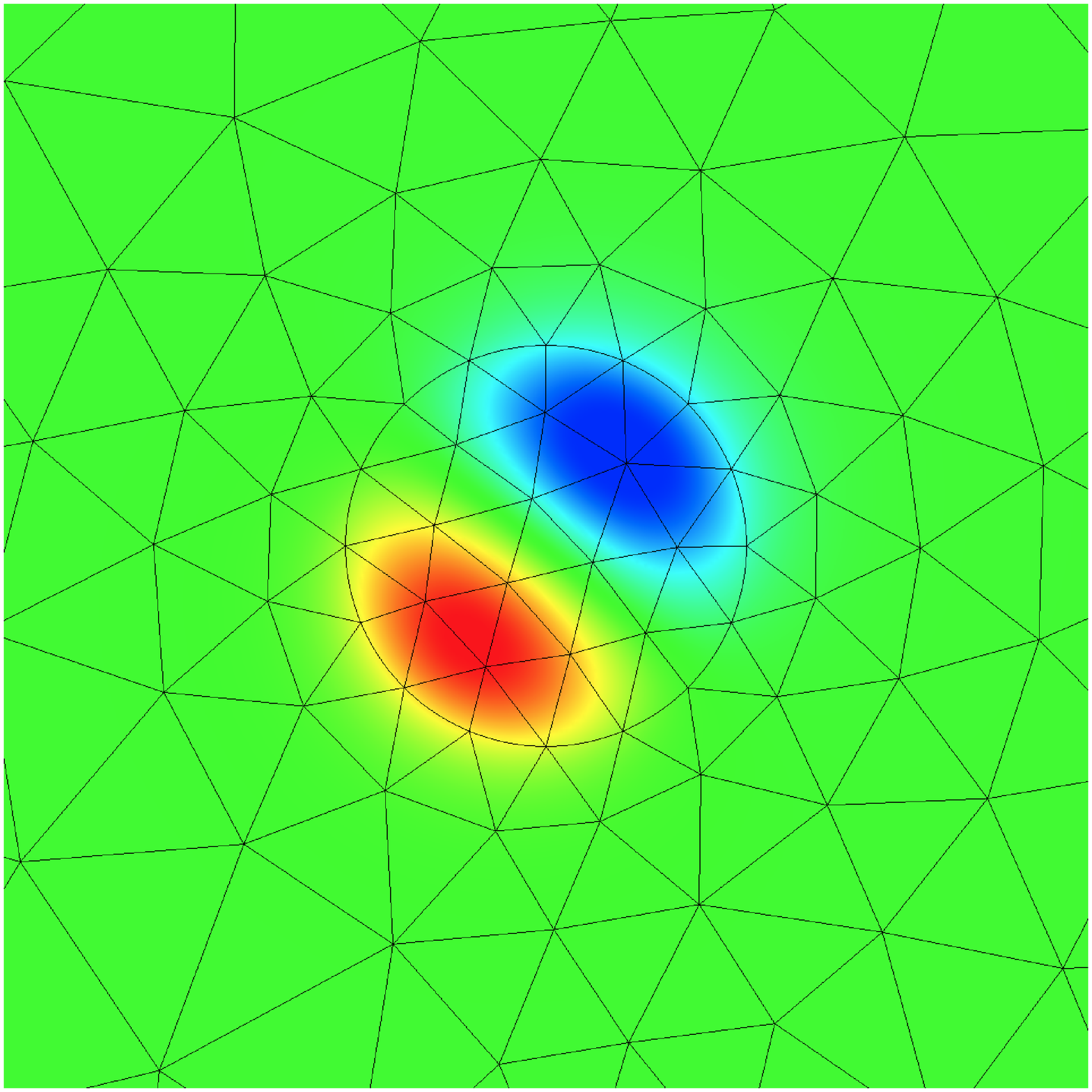}};
		    \begin{scope}[x={(image.south east)},y={(image.north west)}]
		        \draw[black, opacity=.5, thick, dashed] (0.5,0.5) circle (0.19);
		    \end{scope}
		\end{tikzpicture}
		\subcaption{$\varphi^h_4$}
                \label{fig:mode4}
	\end{subfigure}%
	~ 
	\begin{subfigure}[t]{0.3\textwidth}
		\centering
		\begin{tikzpicture}
		    \node[anchor=south west,inner sep=0] (image) at (0,0) {\includegraphics[width=0.875\textwidth]{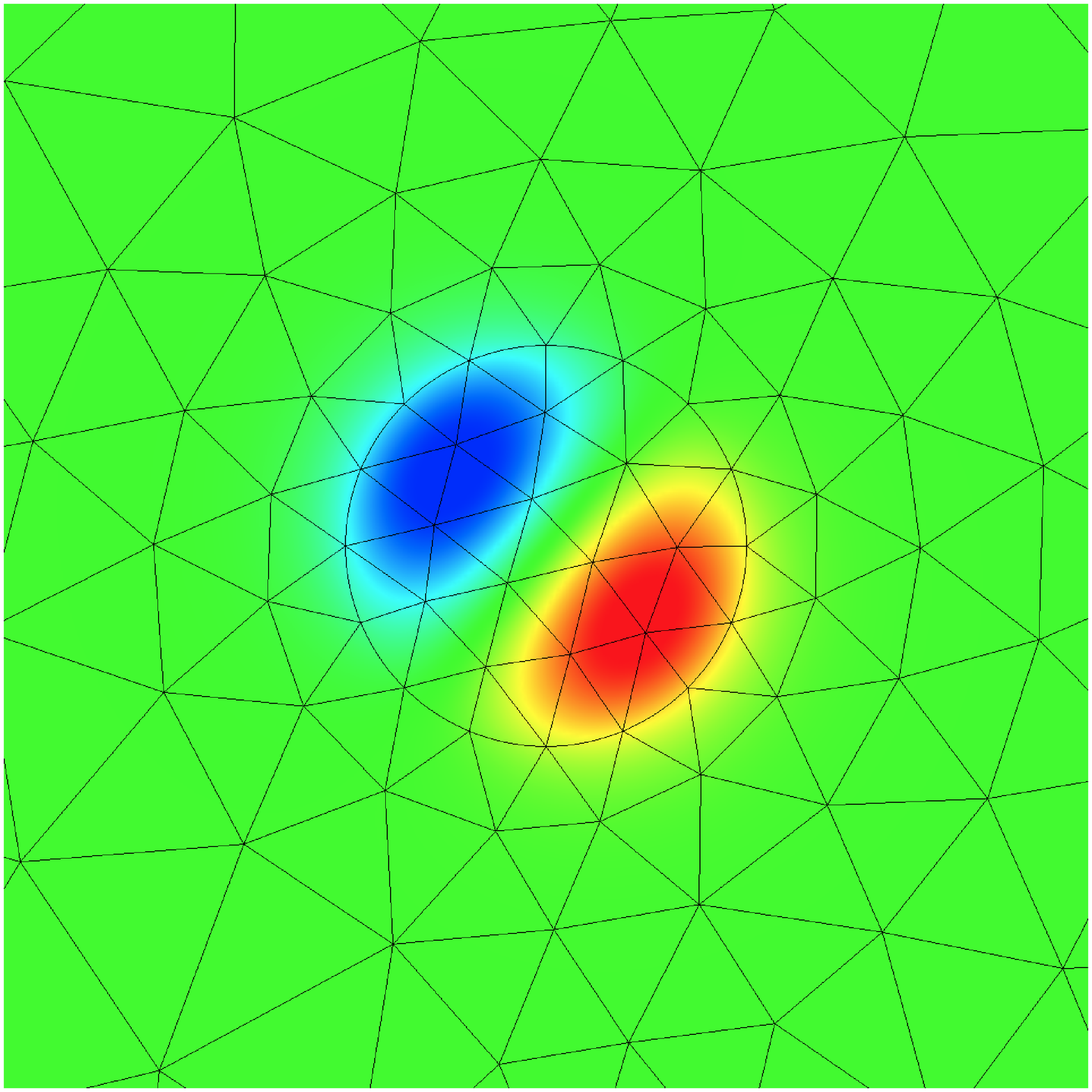}};
		    \begin{scope}[x={(image.south east)},y={(image.north west)}]
		        \draw[black, opacity=.5, thick, dashed] (0.5,0.5) circle (0.19);
		    \end{scope}
		\end{tikzpicture}
		\subcaption{$\varphi^h_5$}
                \label{fig:mode5}
	\end{subfigure}%
	~ 
	\begin{subfigure}[t]{0.3\textwidth}
		\centering
		\begin{tikzpicture}
		    \node[anchor=south west,inner sep=0] (image) at (0,0) {\includegraphics[width=0.875\textwidth]{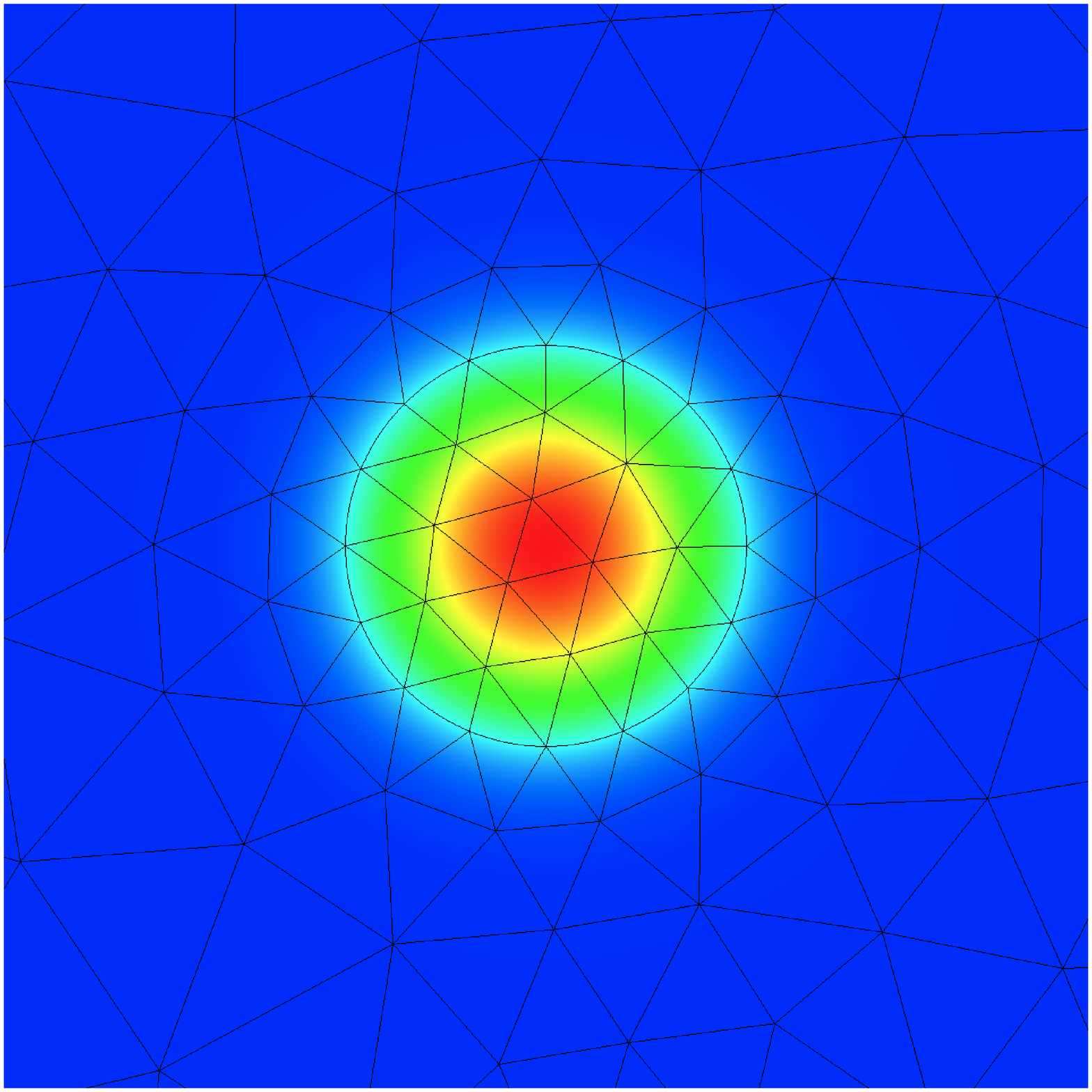}};
		    \begin{scope}[x={(image.south east)},y={(image.north west)}]
		        \draw[black, opacity=.5, thick, dashed] (0.5,0.5) circle (0.19);
		    \end{scope}
		\end{tikzpicture}
		\subcaption{$\varphi^h_6$}
                \label{fig:mode6}
	\end{subfigure}
	\caption{A close view of the approximate eigenfunctions $\varphi_j^h$
          computed by FEAST for the ytterbium-doped fiber. The
          boundary of the fiber core region
          is marked by dashed black circles.}
	\label{fig:ybeigenfuncs}
\end{figure}

We also conducted a convergence study. We began with a mesh whose
approximate mesh size in the core region is $h_c= 1/16$. We performed
three uniform mesh refinements, where each refinement halved the mesh
size. After each refinement, the elements intersecting the core or
cladding boundary were curved again using the geometry information.
Using the DPG discretization and $N = 16$ quadrature points for the
contour integral, we computed the 6 eigenvalues, denoted by
$\hat{\lambda}_l^h$, and compared them with the exact eigenvalues on
the scaled domain, denoted by $\hat{\lambda}_l =
\rcore^2\beta_l^2$. For the parameter values set in~\eqref{eq:Nufern}, there are
six such $\hat{\lambda}_l$ (counting multiplicities) whose approximate values are
$
 \hat{\lambda}_1= 2932065.0334243, \;
  \hat{\lambda}_2 = \hat{\lambda}_3 = 2932475.1036310,\;
  \hat{\lambda}_4= \hat{\lambda}_5=2934248.1978369, \;
  \hat{\lambda}_6=2935689.8561775.
$
Fixing $p=3$, we
report the relative eigenvalue errors
\[
  e_l =\frac{ | \hat{\lambda}_l - \hat{\lambda}_l^h| }{ \hat{\lambda}_l^h}
\]
in Table~\ref{tab:fiber_yb_rates} for each $l$ (columns) and each
refinement level (rows).  A column next to an $e_l$-column indicates
the numerical order of convergence (computed as described in
Section~\ref{NS}\rev{)}.  The observed convergence rates are somewhat near
the order of 6 expected from the previous theory. The match in the
rates is not as close as in the results from the ``textbook''
benchmark examples of Section~\ref{NS}, presumably because mesh
curving may have an influence on the pre-asymptotic behavior. Since
the relative error values have quickly approached machine precision,
further refinements were not performed.

\begin{table}
  \setlength\tabcolsep{4pt}
  \centering
  \begin{footnotesize}    
  \begin{tabular}{|c|cc|cc|cc|cc|cc|cc|}
    \hline 
    core $h$     & $e_1$ & {\tiny{NOC}}     & $e_2$    & {\tiny{NOC}} & $e_3$    &  {\tiny{NOC}}
    & $e_4$   & {\tiny{NOC}} & $e_5$ & {\tiny{NOC}}  & $e_6$ & {\tiny{NOC}}
    \\ \hline 
    $h_c $  & 1.26e-07 &  --  & 2.01e-07 & --  & 1.81e-07 &  -- & 4.99e-08 & --&4.37e-08&-- &1.72e-08 &-- \\ 
    $h_c/2$ & 9.42e-09 & 3.7  & 1.63e-08 & 3.6 & 1.32e-08 & 3.8 & 6.46e-09 &3.0&4.84e-09&3.2&3.38e-09 &2.4 \\  
    $h_c/4$ & 1.17e-10 & 6.3  & 2.13e-10 & 6.3 & 1.80e-10 & 6.2 & 7.03e-11 &6.5&4.84e-11&6.6&3.64e-11 &6.5 \\
    $h_c/8$ & 9.16e-14 & 10.3 & 1.33e-12 & 7.3 & 3.06e-13 & 9.2 & 3.75e-13 &7.6&6.87e-13&6.1&6.69e-14 &9.1 \\
    \hline
  \end{tabular}
  \end{footnotesize}
  \caption{Convergence rates of the fiber eigenvalues}
  \label{tab:fiber_yb_rates}
\end{table}


\begin{thebibliography}{10}
	
\bibitem{Beyn2012}
{\sc W.-J. Beyn}, {\em An integral method for solving nonlinear eigenvalue
	problems}, Linear Algebra Appl., 436 (2012), pp.~3839--3863.

\bibitem{BoumaGopalHarb14}
{\sc T.~Bouma, J.~Gopalakrishnan, and A.~Harb}, {\em Convergence rates of the
  {DPG} method with reduced test space degree}, Computers and Mathematics with
  Applications, 68 (2014), pp.~1550--1561.

\bibitem{BuhleSalam18}
{\sc T.~B{\"{u}}hler and D.~A. Salamon}, {\em Functional Analysis}, American
  Mathematical Society, 2018.

\bibitem{CarstDemkoGopal14}
{\sc C.~Carstensen, L.~Demkowicz, and J.~Gopalakrishnan}, {\em A posteriori
  error control for {DPG} methods}, SIAM J Numer. Anal., 52 (2014),
  pp.~1335--1353.

\bibitem{CarstDemkoGopal16}
{\sc C.~Carstensen, L.~Demkowicz, and J.~Gopalakrishnan}, {\em Breaking spaces
  and forms for the {DPG} method and applications including {Maxwell}
  equations}, Computers and Mathematics with Applications, 72 (2016),
  pp.~494--522.

\bibitem{CarstHellw18}
{\sc C.~Carstensen and F.~Hellwig}, {\em Optimal convergence rates for adaptive
  lowest-order discontinuous {P}etrov-{G}alerkin schemes}, SIAM J. Numer.
  Anal., 56 (2018), pp.~1091--1111.

\bibitem{DemkoGopal11}
{\sc L.~Demkowicz and J.~Gopalakrishnan}, {\em A class of discontinuous
  {P}etrov-{G}alerkin methods. {P}art {II}: Optimal test functions}, Numerical
  Methods for Partial Differential Equations, 27 (2011), pp.~70--105.

\bibitem{DemkoGopal13a}
{\sc L.~Demkowicz and J.~Gopalakrishnan}, {\em A primal {DPG} method without a
  first-order reformulation}, Computers and Mathematics with Applications, 66
  (2013), pp.~1058--1064.

\bibitem{GopalGrubiOvall18}
{\sc J.~Gopalakrishnan, L.~Grubi\v{s}i\'{c}, and J.~Ovall}, {\em Spectral
  discretization errors in filtered subspace iteration}, Preprint:
  arXiv:1709.06694,  (2018).

\bibitem{Gopal17}
{\sc J.~Gopalakrishnan and B.~Q. Parker}, {\em {Pythonic FEAST}}.
\newblock Software hosted at Bitbucket:
  {\url{https://bitbucket.org/jayggg/pyeigfeast}}.

\bibitem{GopalQiu14}
{\sc J.~Gopalakrishnan and W.~Qiu}, {\em An analysis of the practical {DPG}
  method}, Mathematics of Computation, 83 (2014), pp.~537--552.

\bibitem{Grisv85}
{\sc P.~Grisvard}, {\em {Elliptic Problems in Nonsmooth Domains}}, no.~24 in
  Monographs and Studies in Mathematics, Pitman Advanced Publishing Program,
  Marshfield, Massachusetts, 1985.

\bibitem{GuttePolizTang15}
{\sc S.~G\"uttel, E.~Polizzi, P.~T.~P. Tang, and G.~Viaud}, {\em Zolotarev
  quadrature rules and load balancing for the {FEAST} eigensolver}, SIAM J.
  Sci. Comput., 37 (2015), pp.~A2100--A2122.

\bibitem{HorniTowns19}
{\sc A.~Horning and A.~Townsend}, {\em Feast for differential eigenvalue
  problems}, arXiv preprint 1901.04533,  (2019).


\bibitem{Kato95}
{\sc T.~Kato}, {\em Perturbation theory for linear operators}, Classics in
  Mathematics, Springer-Verlag, Berlin, 1995.
\newblock Reprint of the 1980 edition.

\bibitem{Poliz09}
{\sc E.~Polizzi}, {\em A density matrix-based algorithm for solving eigenvalue
  problems}, Phys. Rev. B 79, 79 (2009), p.~115112.

\bibitem{Reide16}
{\sc G.~A. Reider}, {\em Photonics: An introduction}, Springer, 2016.

\bibitem{Saad16}
{\sc Y.~Saad}, {\em Analysis of subspace iteration for eigenvalue problems with
  evolving matrices}, SIAM J. Matrix Anal. Appl., 37 (2016), pp.~103--122.

\bibitem{Schmu12}
{\sc K.~Schm{\"u}dgen}, {\em Unbounded self-adjoint operators on {H}ilbert
  space}, vol.~265 of Graduate Texts in Mathematics, Springer, Dordrecht, 2012.

\bibitem{Schob17}
{\sc J.~Sch{\"{o}}berl}, {\em {NGSolve}}.
\newblock {\url{http://ngsolve.org}}.

\bibitem{SakurSugiu03}
{\sc T.~Sakurai and H.~Sugiura}, {\em A projection method for generalized
	eigenvalue problems using numerical integration}, in Proceedings of the 6th
{J}apan-{C}hina {J}oint {S}eminar on {N}umerical {M}athematics ({T}sukuba,
2002), vol.~159:1, 2003, pp.~119--128.

\bibitem{TrefeBetcke06}
{\sc L.N.~Trefethen and T.~Betcke}, {\em Computed eigenmodes of planar regions}, in
  Recent advances in differential equations and mathematical physics, vol. 412 of Contemp. Math.,
  Amer. Math. Soc., Providence, RI, 2006, pp. 297-314.

\end{thebibliography}
\end{document}